\newcommand{\parskipsize}{.4em}
\newtheorem{notation}[theorem]{Notation}
\renewcommand{\S}{\mathbb S}
\renewcommand{\P}{\mathbb P}
\newcommand{\F}{\mathbb F}
\DeclareMathOperator{\GL}{GL}
\DeclareMathOperator{\Aut}{Aut}
\DeclareMathOperator{\diag}{diag}
\DeclareMathOperator{\Gal}{Gal}
\begin{document}
\title{Counting the number of non-isotopic Taniguchi semifields}

\author{Faruk G\"olo\u{g}lu \inst{1} \and Lukas K\"olsch\inst{2}}
\institute{Charles University Prague \\ \email{farukgologlu@gmail.com} \and University of South Florida\\ \email{lukas.koelsch.math@gmail.com}}
\maketitle              
\begin{abstract}
We investigate the isotopy question for Taniguchi semifields. We give a complete characterization when two Taniguchi semifields are isotopic. We further give  precise upper and lower bounds for the total number of non-isotopic Taniguchi semifields, proving that there are around $p^{m+s}$ non-isotopic Taniguchi semifields of order $p^{2m}$ where $s$ is the largest divisor of $m$ with $2s\neq m$. This result proves that the family of Taniguchi semifields is (asymptotically) the biggest known family of semifields of odd order. The key ingredient of the proofs is a technique to determine isotopy that uses group theory to exploit the existence of certain large subgroups of the autotopism group of a semifield. 
\keywords{semifields  \and isotopy \and projective planes.}
\end{abstract}
\section{Introduction}

A (finite) \textbf{semifield} $\mathbb{S} = (S,+,\circ)$ is a finite set $S$ equipped with two operations $(+,\circ)$
satisfying the following axioms. 
\begin{enumerate}
\item[(S1)] $(S,+)$ is a group.
\item[(S2)] For all $x,y,z \in S$,
\begin{itemize}
\item $x\circ (y+z) = x \circ y + x \circ z$,
\item $(x+y)\circ z = x \circ z + y \circ z$.
\end{itemize}
\item[(S3)] For all $x,y \in S$, $x \circ y = 0$ implies $x=0$ or $y=0$.
\item[(S4)] There exists $\epsilon \in S$ such that $x\circ \epsilon = x = \epsilon \circ x$.
\end{enumerate}
An algebraic object satisfying the first three of the above axioms is called 
a \textbf{pre-semifield}.

If $\P = (P,+,\circ)$ is a pre-semifield, then $(P,+)$ is 
an elementary abelian $p$-group \cite[p. 185]{Knuth65}, and $(P,+)$ can be viewed as
an $n$-dimensional $\F_{p}$-vector space $\F_p^n$. A pre-semifield $\P = (\F_p^n,+,\circ)$ 
can be converted to a semifield $\S = (\F_p^n,+,\ast)$ using {\em Kaplansky's trick} (see e.g.~\cite[Section 1.1]{lavrauw2011finite}).

Two pre-semifields $\P_1 = (\F_p^n,+,\circ_1)$ and $\P_2 = (\F_p^n,+,\circ_2)$ 
are said to be \textbf{isotopic} if there exist $\F_{p}$-linear bijections 
$L,M$ and $N$ of $\F_p^n$ satisfying
\[
N(x \circ_1 y) = L(x) \circ_2 M(y).
\]
Such a triple $\gamma = (N,L,M)$ is called an \textbf{isotopism} between $\P_1$ and $\P_2$. 
Isotopisms between a pre-semifield $\P$ and itself are called \textbf{autotopisms}. Isotopy of pre-semifields is an equivalence relation and 
the pre-semifield $\P$ and the corresponding semifield $\S$ constructed 
by Kaplansky's trick are always isotopic.

Research on semifields started more than 100 years ago with the work of Dickson~\cite{dickson1906commutative}. Over time, they received much attention due to their connections to several different areas. Firstly, every semifield coordinatizes a projective plane and different semifields coordinatize
isomorphic planes if and only if they are isotopic 
(\cite{Albert60}, see \cite[Section 3]{Knuth65} for a detailed
treatment). More recently, semifields have been the center of much attention since they are equivalent to Maximum Rank Distance codes with certain parameters (see e.g.~\cite{sheekey}) and can be used to construct other combinatorial structures like relative difference sets~(see \cite{pott2014semifields}).

Deciding whether given (pre-)semifields are isotopic or not is generally a very difficult question, and finding effective ways to prove non-isotopy of semifields is considered a major open question (see e.g. \cite[p. 936]{KW}). Most results on the isotopy of semifields are based on isotopy invariants like the nuclei, however it is well known that potentially many non-isotopic semifields can have the same nuclei, and having more precise tools is desirable. In~\cite{comm_arxiv}, the authors developed a technique to settle the isotopy question for a specific family of commutative semifields.  In this work, we will focus on the (non-commutative) Taniguchi semifields introduced in~\cite{tani_semi} by extending the methods introduced in~\cite{comm_arxiv}.

Note that many constructions of semifields also yield corresponding constructions 
of almost perfect nonlinear (APN) functions which play a big role in the design of 
block ciphers for cryptography. This is also the case with the Taniguchi semifields.
However, the construction used by Taniguchi for the semifields, by design, are 
more complicated than that for the Taniguchi APN functions. The equivalence question 
for the Taniguchi APN functions was recently solved in~\cite{kaspers_phd,kasperszhou} 
using a more elementary, but a very technical approach compared to the techniques 
we use. A variant of the approach we introduce here yields a much shorter proof for 
the equivalence problem for the Taniguchi APN functions. It seems that the isotopy 
question of the Taniguchi semifields is more complex than the equivalence question 
for the Taniguchi APN functions. 

In Section~\ref{sec:setup}, we give the basic definitions that are important for our problem and give some simple general results. In Section~\ref{sec:group}, we introduce the group theoretic techniques that are key to later sections. In Section~\ref{sec:isotopy}, we investigate when two Taniguchi pre-semifields are isotopic. A complete characterization is given in Theorem~\ref{thm:tani_isotopy}. Section~\ref{sec:counts} deals with giving a count of non-isotopic Taniguchi pre-semifields, with precise bounds given in Theorem~\ref{thm_count}. The last section compares these results to similar results for other semifields, in particular to semifields constructed via skew-polynomial rings (or cyclic semifields).

\section{The setup} \label{sec:setup}

The Taniguchi pre-semifields are defined in~\cite{tani_semi} on $\F_{p^n}\cong \F_{p^m} \times \F_{p^m}$ with $n=2m$ via the pre-semifield multiplication
\[(x,y) \ast (u,v) = ((x^qu+\alpha xu^q)^{q^2}-a(x^qv-\alpha u^qy)^q-b(y^qv+\alpha yv^q),xv+yu),\]
where 
\begin{itemize}
\item $q=p^k$ for some $1 \leq k \leq m-1$, 
\item $-\alpha$ is not a $(q-1)$-st power, and 
\item the projective polynomial $P_{q,a,b}(x) = x^{q+1}+ax+b$ has no roots in $\F_{p^m}$. 
\end{itemize}
In this paper, we will instead use a different, isotopic representation of the Taniguchi pre-semifield. This representation arises after taking $x,u$ to the $\overline{q}^2$-th power, where $\overline{q}=p^{m-k}$, and then taking the second component to the $q^2$-th power:
\begin{equation}
 (x,y) \circ (u,v) = (x^qu+\alpha^{q^2} xu^q-a(xv^q-\alpha^q uy^q)-b(y^qv+\alpha yv^q),xv^{q^2}+y^{q^2}u).
\label{eq:tanidef}
\end{equation}
The benefit of this representation is that both components of the operations employ only one nontrivial field automorphism (namely, $x \mapsto x^q$ in the first, and $x \mapsto x^{q^2}$ in the second component), which gives in particular more structure to certain autotopisms as we will see later.

If $a\neq 0$ we can always find an isotopic Taniguchi pre-semifield with the parameter $a=1$ by using the transformation $y \mapsto \delta y$ and $v \mapsto \delta v$ for a suitable $\delta \in \F_{p^m}^*$. We thus only have to distinguish the cases $a=0$ and $a=1$ when discussing the isotopy question. 
{We will denote the Taniguchi pre-semifield on $\F_{p^n}\cong \F_{p^m} \times \F_{p^m}$ by $T(q,\alpha,a,b)$, where the value of $m$ is fixed and taken from context.}

We also exclude the case $k=m/2$ since in this case $q^2\equiv 1 \pmod {p^m-1}$ which is a special case that requires slightly different methods. Also observe that these pre-semifields are already contained in a family of Bierbrauer~\cite{bbsf,bbeven}, so we believe that it makes sense to exclude them from our treatment here.

It is possible to discern some isotopisms immediately:
\begin{proposition}  \label{prop:minus}
Let $q = p^{k}$ and $\overline{q} = p^{m-k}$.
Each Taniguchi pre-semifield $T(q,\alpha,a,b)=\P_1$ is isotopic to 
another Taniguchi pre-semifield 
$T(\overline{q},1/\alpha^{q^2},a(\alpha^{q-1}/b),\alpha^{q^2-1}/b)=\P_2$. 
\end{proposition}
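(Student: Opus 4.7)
The plan is to exhibit an explicit isotopism $(N,L,M)$ between $\P_1 = T(q,\alpha,a,b)$ and $\P_2 = T(\overline q, 1/\alpha^{q^2}, a\alpha^{q-1}/b, \alpha^{q^2-1}/b)$. The guiding observation is that $q\overline q = p^m$, so the maps $z\mapsto z^q$ and $z\mapsto z^{\overline q}$ are mutually inverse $\F_p$-linear automorphisms of $\F_{p^m}$. Hence applying a $q$-Frobenius (together with a coordinate swap) to each input of $\circ_2$ cancels the $\overline q$-Frobenius built into the multiplication formula of $\circ_2$, so that combinations such as $(y^q)^{\overline q}v^q$ collapse to $y v^q$, producing an expression with the same ``$q$-structure'' as $\circ_1$ up to a scalar rescaling.

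Concretely, one tries the ansatz
\begin{align*}
L(x,y) = (y^q,\, x^q), \qquad M(u,v) = (v^q,\, u^q), \qquad N(A,B) = \bigl(-A/(b\alpha),\, B^{\overline q}\bigr),
\end{align*}
and verifies $N((x,y)\circ_1(u,v)) = L(x,y)\circ_2 M(u,v)$ by direct expansion. Using $(y^q)^{\overline q} = y$, $q^2\overline q \equiv q \pmod{p^m-1}$, and $\alpha^{p^m} = \alpha$, the target parameters simplify as $\alpha^{*\overline q^2} = 1/\alpha$, $\alpha^{*\overline q} = 1/\alpha^q$, $b^*\alpha^* = 1/(b\alpha)$, and $a^*\alpha^{*\overline q} = a/(b\alpha)$. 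Matching the coefficients of the six monomials $x^q u,\, xu^q,\, xv^q,\, uy^q,\, y^q v,\, yv^q$ in the first component and the two monomials $x^{\overline q}v^q,\, y^q u^{\overline q}$ in the second component then confirms the identity on the nose.

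The main obstacle is locating the correct ansatz. A naive first attempt with coordinate-swapping scalar maps $L(x,y)=(\lambda_1 y, \lambda_2 x)$, $M(u,v)=(\mu_1 v, \mu_2 u)$ together with $N(A,B) = (\nu_1 A^{\overline q}, \nu_2 B^{\overline q^2})$ forces extraneous constraints on $a$ and $b$ (roughly, $a^{\overline q+1}$ must equal $b$ up to a $(\overline q-1)$-th root of unity) that do not hold generically; the Frobenius twist must instead be placed on $L$ and $M$, with $N$ merely rescaling the first component and applying an $\overline q$-Frobenius to the second. Once the correct ansatz is in hand, the verification is a routine monomial-by-monomial comparison relying only on the arithmetic identities $q\overline q = p^m$ and $\alpha^{p^m-1} = 1$.
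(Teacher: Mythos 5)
Your proof is correct and is essentially the paper's own argument: the explicit triple $L(x,y)=(y^q,x^q)$, $M(u,v)=(v^q,u^q)$, $N(A,B)=(-A/(b\alpha),B^{\overline q})$ is exactly the composite of the paper's successive steps (coordinate swap, raising the inputs and the second output component to the $\overline q$-th power, and dividing the first component by $-b\alpha$), and your coefficient checks and the simplifications $\alpha^{*\overline q^2}=1/\alpha$, $\alpha^{*\overline q}=1/\alpha^q$, $b^*\alpha^*=1/(b\alpha)$, $a^*\alpha^{*\overline q}=a/(b\alpha)$ all hold. The only difference is presentational: you verify the identity monomial-by-monomial for a fixed ansatz, whereas the paper derives the same isotopism by transforming the multiplication formula step by step.
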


\begin{proof}

We first perform a change of variables $x \leftrightarrow y$, $u \leftrightarrow v$ on $\P_1$ (which clearly preserves isotopy). The result is 
\[(x,y) *_1 (u,v)  = (y^qv+\alpha^{q^2}yv^q-a(yu^q-\alpha^q x^qv)-b(x^qu+\alpha xu^q),yu^{q^2}+x^{q^2}v).\]
We take the second component to the power $\overline{q}$ and then $x,y,u,v$ to the power $\overline{q}$ as well. The result is
\begin{align*}
(x,y) *_2 (u,v)  =& (yv^{\overline{q}}+\alpha^{q^2} y^{\overline{q}}v-a(y^{\overline{q}}u-\alpha^q xv^{\overline{q}})-b(xu^{\overline{q}}+\alpha x^{\overline{q}}u),y^{\overline{q}^2}u+xv^{\overline{q}^2}) \\
=&(\alpha^{q^2} ((1/\alpha^{q^2}) yv^{\overline{q}}+y^{\overline{q}}v)-a\alpha^q((1/\alpha^q) y^{\overline{q}}u-xv^{\overline{q}})-b\alpha((1/\alpha)xu^{\overline{q}}+ x^{\overline{q}}u),\\&y^{\overline{q}^2}u+xv^{\overline{q}^2}).
\end{align*}
Now we can divide the first component by $-b\alpha$, which yields
\begin{align*}
(x,y) *_3 (u,v)  =&(x^{\overline{q}}u+(1/\alpha)xu^{\overline{q}}-a(\alpha^{q-1}/b)(xv^{\overline{q}}-(1/\alpha^q) y^{\overline{q}}u)- (\alpha^{q^2-1}/b) (y^{\overline{q}}v+(1/\alpha^{q^2}) yv^{\overline{q}}),\\&xv^{\overline{q}^2}+y^{\overline{q}^2}u),
\end{align*}
proving the desired isotopy.
\qed
\end{proof}

With Proposition~\ref{prop:minus} it suffices to consider coefficients $q=p^k$ with $k < m/2$ when tackling the isotopy question (recall that we exclude the case $k=m/2$).

\section{Group theoretic preliminaries}\label{sec:group}
We now introduce the machinery of the technique to determine isotopy. The ideas are based on an approach developed by the authors in \cite{comm_arxiv} for a family of commutative semifields.

We denote the set of all autotopisms of a pre-semifield $\P$ by $\Aut(\P)$. It is 
easy to check that $\Aut(\P)$ is a group under component-wise composition, i.e., 
$(N_1,L_1,M_1) \circ (N_2,L_2,M_2) = (N_1 \circ N_2, L_1 \circ L_2, M_1 \circ M_2)$. 
We will often view $\Aut(\P)$ as a subgroup of $\GL(\F_{p^n})^3 \cong \GL(\F_{p^m} \times \F_{p^m})^3 \cong \GL(n,\F_p)^3$.
Our approach is based on the following simple and well-known result (see e.g. \cite{comm_arxiv}).

\begin{lemma} \label{lem:conj_groups}
Let $\P_1=(\F_p^n,+,\circ_1)$, $\P_2=(\F_p^n,+,\circ_2)$ be isotopic
pre-semifields via the isotopism $\gamma \in \GL(\F_{p^n})^3$. Then 
$\gamma^{-1}\Aut(\P_2)\gamma = \Aut(\P_1)$. 
\end{lemma}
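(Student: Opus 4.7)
The statement is a standard conjugation-of-stabilizers type result, so I expect the proof to be a direct unraveling of definitions.

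The plan is to write $\gamma = (N,L,M)$ and pick an arbitrary autotopism $\alpha = (N',L',M') \in \Aut(\P_2)$, then verify by a short chase of definitions that $\gamma^{-1}\alpha\gamma = (N^{-1}N'N,\ L^{-1}L'L,\ M^{-1}M'M)$ is an autotopism of $\P_1$. The isotopism identity $N(x \circ_1 y) = L(x) \circ_2 M(y)$, after substituting $L^{-1}(x')$ for $x$ and $M^{-1}(y')$ for $y$, yields the equivalent ``backwards'' identity $N^{-1}(x' \circ_2 y') = L^{-1}(x') \circ_1 M^{-1}(y')$, which is the key tool to translate a $\circ_2$-computation back into a $\circ_1$-computation.

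With this in hand, the calculation is essentially forced. First I would apply the isotopism $\gamma$ to convert $N^{-1}N'N(x \circ_1 y)$ into $N^{-1}(L'L(x) \circ_2 M'M(y))$; then I would apply the autotopism property of $\alpha$ in $\P_2$; and finally I would use the backwards form of the isotopism identity to pull $N^{-1}$ back through $\circ_2$, obtaining $L^{-1}L'L(x) \circ_1 M^{-1}M'M(y)$. This proves the inclusion $\gamma^{-1}\Aut(\P_2)\gamma \subseteq \Aut(\P_1)$.

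For the reverse inclusion, the cleanest route is to observe that $\gamma^{-1}$ is itself an isotopism, but this time from $\P_2$ to $\P_1$ (which is immediate from the backwards identity derived above). Applying the inclusion just proved with the roles of $\P_1,\P_2$ swapped and with $\gamma$ replaced by $\gamma^{-1}$ gives $\gamma\Aut(\P_1)\gamma^{-1} \subseteq \Aut(\P_2)$, equivalently $\Aut(\P_1) \subseteq \gamma^{-1}\Aut(\P_2)\gamma$, finishing the proof.

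I do not anticipate a real obstacle here; the only thing to be careful about is keeping the three components $N,L,M$ (and the order of composition) straight during the chase, and remembering to derive both inclusions rather than just one.
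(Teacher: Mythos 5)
Your argument is correct and is exactly the standard definition-chase the paper has in mind; the paper itself omits the proof, citing the lemma as simple and well-known. Both inclusions are handled properly, including the key observation that $\gamma^{-1}=(N^{-1},L^{-1},M^{-1})$ is an isotopism from $\P_2$ to $\P_1$, so nothing is missing.
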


The key fact that we will use is that the autotopism groups of the Taniguchi pre-semifields have a large and easily identifiable subgroup. We introduce some notations:

We write $\F_{p}$-linear mappings $L$ from $\F_{p^n}$ to itself as $2 \times 2$ matrices
of       $\F_{p}$-linear mappings     from $\F_{p^m}$ to itself.
That is,
\[
L=\begin{pmatrix}
	L_1 & L_2 \\
	L_3 & L_4
\end{pmatrix}, \textrm{ for } L_i \colon \F_{p^m} \rightarrow \F_{p^m}. 
\]
We call the constituent functions $L_1,\dots,L_4$ of $L$ \textit{subfunctions of} $L$.
Set 
\[
\gamma_r=(N_r,L_r,M_r)\in \GL(\F_{p^n})^3  \textrm{ with } 
N_r=\begin{pmatrix}
	m_{r^{q+1}} & 0 \\
	0 & m_{r^{q^2+1}}
\end{pmatrix}, \quad 
L_r=M_r=\begin{pmatrix}
	m_{r} & 0 \\
	0 & m_{r}
\end{pmatrix}, 
\]
where $m_r$ denotes  multiplication with the finite field element $r \in \F_{p^m}^*$. 
For simplicity, we write these diagonal matrices also in the form $\diag(m_{r},m_{r})$, 
so 
\[
\gamma_r = (\diag(m_{r^{q+1}},m_{r^{q^2+1}}),\diag(m_{r},m_{r}),\diag(m_{r},m_{r})).
\]

We fix some further notation that we will use from now on:
\begin{notation}
\begin{itemize}
\item[]
\item Let $p$ be a prime.
\item Set $q=p^k$ with $k<m/2$ and $\overline{q}=p^{m-k}$.
\item Define the cyclic group 
\[
	Z^{(q)} = \{ \gamma_r : r \in \F_{p^m}^*\} \leq \GL(\F_{p^n})^3
\] of order $p^m-1$. It is easy to see (Lemma~\ref{lem_first} below) that $Z^{(q)}\leq \Aut(\P)$.
\item Let $p'$ be a $p$-primitive divisor of $p^m-1$, i.e. $p'|p^m-1$ and $p'\nmid p^{k'}-1$ for $k'<m$. Such a prime $p'$ always exists if 
$m > 2$ and $(p,m) \neq (2,6)$ by Zsigmondy's Theorem (see e.g.~\cite[Chapter IX., Theorem 8.3.]{HuppertII}).  We thus always stipulate $m>2$ 
and $(p,m)\neq (2,6)$ from now on.
\item Let $R$ be the unique Sylow $p'$-subgroup of $\F_{p^m}^*$.
\item Define
\[
Z_R^{(q)} = \{ \gamma_r \colon r \in R\},
\]
which is the unique Sylow $p'$-subgroup of $Z^{(q)}$ with $|R|$ elements. 
\item For a Taniguchi pre-semifield $\P=T(q,\alpha,a,b)$, denote by 
\[
C_{q,\alpha,a,b} = C_{\Aut(\P)}(Z_R^{(q)}), 
\]
the centralizer of $Z_R^{(q)}$ in $\Aut(\P)$.
\item Define
\[
	S = \{\diag(m_r,m_r) \colon r \in \F_{p^m}^*\},
\]
and
\[
	S_R = \{\diag(m_r,m_r) \colon r \in R\}.
\]
\end{itemize}
\end{notation}
Observe that the condition $m>2$ that is necessary to work with a Zsygmondy prime is actually not restrictive since for $m=2$ the only admissible value for $q$ is $q=p=p^{m/2}$ which is precisely the case we exclude anyway.

The crucial fact for our technique is that
$\gamma_r \in \Aut(\P)$ for all $r \in \F_{p^m}^*$ when $\P$ is a Taniguchi
pre-semifield $T(q,\alpha,a,b)$ for arbitrary $\alpha,a,b$, which can be directly verified using Eq.~\eqref{eq:tanidef}:

\begin{lemma}\label{lem_first}
Let $T(q,\alpha,a,b)=\P$ be a Taniguchi pre-semifield on $\F_{p^n}$ with $n=2m$. Then 
$Z^{(q)} \le \Aut(\P)$.
\end{lemma}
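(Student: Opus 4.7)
The plan is to give a direct verification of the autotopism identity
\[
N_r\bigl((x,y)\circ(u,v)\bigr) = L_r(x,y)\circ M_r(u,v)
\]
for every $r \in \F_{p^m}^*$ and every $(x,y),(u,v)\in\F_{p^m}\times\F_{p^m}$, using the explicit formula \eqref{eq:tanidef}. Since $L_r=M_r=\diag(m_r,m_r)$, we have $L_r(x,y) = (rx,ry)$ and $M_r(u,v)=(ru,rv)$, so the task reduces to substituting $(rx,ry,ru,rv)$ in place of $(x,y,u,v)$ in the two components of the Taniguchi multiplication and showing that this is exactly the effect of multiplying the first component by $r^{q+1}$ and the second by $r^{q^2+1}$.

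The key observation that makes the verification essentially computation-free is a homogeneity check. In the first component of \eqref{eq:tanidef}, each of the monomials $x^qu$, $xu^q$, $xv^q$, $uy^q$, $y^qv$, $yv^q$ has total degree $q+1$ in the four scalar variables $x,y,u,v$; therefore replacing each variable by $r$ times itself scales the whole first component by $r^{q+1}$. Similarly, both monomials $xv^{q^2}$ and $y^{q^2}u$ in the second component are homogeneous of total degree $q^2+1$, so the second component is scaled by $r^{q^2+1}$. Since $r\in\F_{p^m}$ commutes with the Frobenius twists inside each term (so that, e.g., $(rx)^q(ru) = r^{q+1}x^qu$), the factoring is immediate.

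Putting the two components together yields
\[
(rx,ry)\circ(ru,rv) \;=\; \bigl(r^{q+1}A,\, r^{q^2+1}B\bigr),
\]
where $(A,B) = (x,y)\circ(u,v)$. The right-hand side is by definition $N_r\bigl((x,y)\circ(u,v)\bigr)$, which proves that $\gamma_r\in \Aut(\P)$. Letting $r$ range over $\F_{p^m}^*$ gives $Z^{(q)}\le\Aut(\P)$.

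There is no real obstacle here; the only thing to be careful about is not to mix up which Frobenius exponent ($q$ or $q^2$) applies to which variable in which monomial, which is exactly the reason for the rewritten representation \eqref{eq:tanidef} in which all Frobenius twists in a given component share a common exponent. This is also the reason the scaling factors in $N_r$ come out as the clean expressions $r^{q+1}$ and $r^{q^2+1}$ rather than something more involved.
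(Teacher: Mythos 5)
Your proof is correct and is exactly the direct verification the paper has in mind: the paper offers no written proof of this lemma, stating only that it "can be directly verified using Eq.~\eqref{eq:tanidef}", and your homogeneity argument (every monomial in the first component has total degree $q+1$ and every monomial in the second has total degree $q^2+1$, so the diagonal substitution scales the components by $r^{q+1}$ and $r^{q^2+1}$ respectively) is the cleanest way to carry out that verification. No gaps.
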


The key result that enables us to settle the question of isotopy for Taniguchi semifields is a slight adaptation from~\cite[Theorem 5.10.]{comm_arxiv}, which deals with certain \emph{commutative} pre-semifields. In some sense, the result we present here is an adaptation of the one from~\cite{comm_arxiv} to a non-commutative semifield. 

\begin{lemma}[{\cite[Lemma 5.7.]{comm_arxiv}}] \label{lem:yoshiara}
 Let $N_{\GL(\F_{p^n})}(S_R)$, $N_{\GL(\F_{p^n})}(S)$ and $C_{\GL(\F_{p^n})}(S_R)$, $C_{\GL(\F_{p^n})}(S)$ be the normalizers and the centralizers of 
 $S_R$ and $S$ in $\GL(\F_{p^n})$. Then
	\begin{enumerate}[(a)]
		\item $N_{\GL(\F_{p^n})}(S_R)=N_{\GL(\F_{p^n})}(S) = \left\{\begin{pmatrix}
			m_{c_1} \tau & m_{c_2} \tau \\
			m_{c_3} \tau&m_{c_4} \tau
		\end{pmatrix} \colon c_1,c_2,c_3,c_4 \in \F_{p^m}^*, \tau \in \Gal(\F_{p^m}/\F_p)\right\}\cap \GL(\F_{p^n})$,
		\item $C_{\GL(\F_{p^n})}(S_R)=C_{\GL(\F_{p^n})}(S) = \left\{\begin{pmatrix}
			m_{c_1}  & m_{c_2}  \\
			m_{c_3}&m_{c_4} 
		\end{pmatrix} \colon c_1,c_2,c_3,c_4 \in \F_{p^m}^*\right\}\cap \GL(\F_{p^n})$.
	\end{enumerate}
\end{lemma}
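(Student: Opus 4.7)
The plan is to analyze an arbitrary $L \in \GL(\F_{p^n})$ block-wise, writing $L = \begin{pmatrix} L_1 & L_2 \\ L_3 & L_4 \end{pmatrix}$ with each $L_i \colon \F_{p^m}\to\F_{p^m}$ an $\F_p$-linear map, and translate the normalizer/centralizer conditions into commutation relations between the $L_i$ and multiplication operators $m_c$. Both parts hinge on a single observation coming from Zsigmondy: an element $r_0 \in R$ of order $p'$ is not contained in any proper subfield of $\F_{p^m}$, so $\F_p(r_0) = \F_{p^m}$. Consequently the $\F_p$-subalgebra of $\mathrm{End}_{\F_p}(\F_{p^m})$ generated by $\{m_r : r \in R\}$ is exactly $\{m_c : c \in \F_{p^m}\}$, a maximal commutative subalgebra isomorphic as an $\F_p$-algebra to the field $\F_{p^m}$.

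For part (b), expanding $L\cdot \diag(m_r, m_r) = \diag(m_r, m_r)\cdot L$ yields $L_i m_r = m_r L_i$ for each $i$ and every $r \in R$. By the observation above, $L_i$ then commutes with every $m_c$, $c \in \F_{p^m}$, so $L_i$ is $\F_{p^m}$-linear as a map $\F_{p^m}\to\F_{p^m}$, hence equals multiplication by $L_i(1)$. This gives $L_i = m_{c_i}$ for some $c_i \in \F_{p^m}$, which is the claimed form for $C_{\GL(\F_{p^n})}(S_R)$; the characterization of $C_{\GL(\F_{p^n})}(S)$ is identical (or follows from $S_R \le S$).

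For part (a), I would exploit that conjugation by $L$ is an $\F_p$-algebra automorphism of the entire endomorphism ring $\mathrm{End}_{\F_p}(\F_{p^n})$ (not just a group automorphism of $\GL(\F_{p^n})$). Since $L$ normalizes $S_R$, conjugation by $L$ preserves the $\F_p$-algebra generated by $S_R$, which by the key observation above equals $\{\diag(m_c, m_c) : c \in \F_{p^m}\}$ and is isomorphic to the field $\F_{p^m}$; it therefore restricts to a field automorphism there, yielding a $\tau \in \Gal(\F_{p^m}/\F_p)$ with $L\,\diag(m_c, m_c)\, L^{-1} = \diag(m_{\tau(c)}, m_{\tau(c)})$ for every $c \in \F_{p^m}$. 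Reading this block-wise gives $L_i(cx) = \tau(c) L_i(x)$ for all $c, x \in \F_{p^m}$; specializing $x = 1$ yields $L_i = m_{c_i}\tau$ with $c_i = L_i(1)$, which is the stated form.

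Finally, the equalities $N_{\GL(\F_{p^n})}(S_R) = N_{\GL(\F_{p^n})}(S)$ and $C_{\GL(\F_{p^n})}(S_R) = C_{\GL(\F_{p^n})}(S)$ come out directly: any matrix of the displayed form visibly normalizes (resp.\ centralizes) the larger group $S$, and conversely any $L$ normalizing $S$ normalizes its unique (hence characteristic) Sylow $p'$-subgroup $S_R$, after which the preceding analysis applies. The only nontrivial step, and therefore the main obstacle, is the passage from ``$L$ preserves the finite set $S_R$'' to ``$L$ preserves the full field $\F_p[S_R]\cong \F_{p^m}$ and acts there as a Galois automorphism''; this is exactly where the Zsigmondy hypothesis, together with the $\F_p$-algebra (rather than merely group) structure of $\mathrm{End}_{\F_p}(\F_{p^n})$, enters the picture.
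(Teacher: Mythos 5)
This lemma is imported by citation from \cite{comm_arxiv} and the present paper contains no proof of it to compare against; your argument is correct and is the standard one for statements of this type: the $p$-primitive prime divisor forces $\F_p[r_0]=\F_{p^m}$ for $r_0\in R$ of order $p'$, so the $\F_p$-algebra generated by $S_R$ is all of $\{\diag(m_c,m_c):c\in\F_{p^m}\}$, whence centralizing elements are blockwise $\F_{p^m}$-linear (hence blockwise multiplications) and normalizing elements induce an $\F_p$-algebra automorphism of this field, i.e.\ a Galois twist, giving $L_i=m_{c_i}\tau$. The only cosmetic point worth noting is that your proof correctly yields $c_i\in\F_{p^m}$ (zero blocks allowed, with $c_i=L_i(1)$), whereas the statement writes $c_i\in\F_{p^m}^*$; the way the lemma is used later (``all of the four subfunctions \dots are zero or monomials'') confirms that your reading is the intended one.
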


The following is an analogue of~\cite[Lemma 5.8.]{comm_arxiv}.
\begin{lemma}\label{lem:zpsylow}
Let $\P=T(q,\alpha,a,b)$  be a Taniguchi pre-semifield. Assume that 
$C_{q,\alpha,a,b}$ contains $Z^{(q)}$ as an index $I$ subgroup such that $p'$ does not divide $I$. 
Then $Z_R^{(q)}$ is a Sylow $p'$-subgroup of $\Aut(\P)$.
\end{lemma}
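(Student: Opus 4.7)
The plan is to combine a Sylow-theoretic analysis of $C_{q,\alpha,a,b}$ with the observation that any $p'$-element of $\Aut(\P)$ normalizing $Z_R^{(q)}$ automatically centralizes it, and then close the gap with the normalizer-growth property of $p'$-groups.

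First I would show that $Z_R^{(q)}$ is the unique Sylow $p'$-subgroup of $C_{q,\alpha,a,b}$. By the definition of the centralizer, $Z_R^{(q)}$ lies in the center of $C_{q,\alpha,a,b}$. The hypothesis $p'\nmid I$ gives $|C_{q,\alpha,a,b}|_{p'} = (p^m-1)_{p'} = |R| = |Z_R^{(q)}|$, so $Z_R^{(q)}$ is a central $p'$-subgroup whose order is exactly the $p'$-part of $|C_{q,\alpha,a,b}|$; being central it is normal, hence the unique Sylow $p'$-subgroup.

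The crucial step is to prove that any $p'$-element $g=(N,L,M)\in\Aut(\P)$ normalizing $Z_R^{(q)}$ actually lies in $C_{q,\alpha,a,b}$. Writing $g\gamma_r g^{-1}=\gamma_{\phi(r)}$ for the induced bijection $\phi\colon R\to R$, the second-coordinate projection reads $L\cdot\diag(m_r,m_r)\cdot L^{-1}=\diag(m_{\phi(r)},m_{\phi(r)})$, hence $L\in N_{\GL(\F_{p^n})}(S_R)$. By Lemma~\ref{lem:yoshiara}(a), $L$ has the form $(m_{c_i}\tau)_{ij}$ for some $\tau\in\Gal(\F_{p^m}/\F_p)$, and a short computation of $L\cdot\diag(m_r,m_r)\cdot L^{-1}$ identifies $\phi(r)$ with $\tau(r)$. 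Consequently the action of $g$ on $Z_R^{(q)}$ has order dividing $m$ while simultaneously having $p'$-power order. Since $p'$ is a $p$-primitive prime divisor of $p^m-1$, the multiplicative order of $p$ modulo $p'$ is $m$, so $m\mid p'-1$ and in particular $p'>m$; the only common divisor of $m$ and a power of $p'$ is $1$, so $\tau=\mathrm{id}$ and $g$ centralizes $Z_R^{(q)}$.

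To conclude, I would take a Sylow $p'$-subgroup $P\le\Aut(\P)$ containing $Z_R^{(q)}$ (which exists after a Sylow conjugation) and argue by contradiction. If $P\supsetneq Z_R^{(q)}$, the standard normalizer-growth property for the finite $p'$-group $P$ produces some $g\in N_P(Z_R^{(q)})\setminus Z_R^{(q)}$. By the crucial step, $g\in C_{q,\alpha,a,b}$, hence $g\in P\cap C_{q,\alpha,a,b}$, a $p'$-subgroup of $C_{q,\alpha,a,b}$ containing the unique Sylow $p'$-subgroup $Z_R^{(q)}$; this forces $P\cap C_{q,\alpha,a,b}=Z_R^{(q)}$, contradicting $g\notin Z_R^{(q)}$. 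The main obstacle I anticipate is the crucial step: tracking the conjugation action of an arbitrary normalizer element through its second component, identifying the resulting permutation with a Galois element via Lemma~\ref{lem:yoshiara}, and invoking Zsigmondy at the right moment to force that element to be trivial. Everything else is standard Sylow-theoretic bookkeeping.
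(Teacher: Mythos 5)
Your proof is correct, but it takes a genuinely different route from the paper's. The paper embeds a Sylow $p'$-subgroup $T$ of $\Aut(\P)$ containing $Z_R^{(q)}$ into a Sylow $p'$-subgroup of $\GL(\F_{p^n})^3$ and invokes the externally cited fact that all such Sylow subgroups are \emph{abelian}; abelianness instantly places $T$ inside $C_{q,\alpha,a,b}$, and then a one-line divisibility argument on $[T:Z_R^{(q)}] \mid I_1 I$ with $p'\nmid I_1I$ forces $T=Z_R^{(q)}$. You avoid the abelian-Sylow fact entirely and instead reach the same intermediate conclusion (``a $p'$-overgroup of $Z_R^{(q)}$ must sit in the centralizer'') by hand: you show any $p'$-element normalizing $Z_R^{(q)}$ has second component in $N_{\GL(\F_{p^n})}(S_R)$, read off from Lemma~\ref{lem:yoshiara}(a) that the induced permutation of $R$ is a Galois automorphism $\tau$, and kill $\tau$ on $R$ by the coprimality $\gcd(m,p'^h)=1$ coming from $m\mid p'-1$ (a consequence of $p'$ being a $p$-primitive divisor); normalizer growth in the $p'$-group $P$ plus the uniqueness of $Z_R^{(q)}$ as the central Sylow $p'$-subgroup of $C_{q,\alpha,a,b}$ then closes the argument. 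What the paper's version buys is brevity, at the price of importing a structural fact about $\GL$ from \cite{comm_arxiv}; what yours buys is self-containedness, using only Lemma~\ref{lem:yoshiara}, Zsigmondy, and elementary Sylow theory, at the price of more bookkeeping. Both arguments are sound; your final contradiction could even be shortened by noting directly that $P\cap C_{q,\alpha,a,b}$ is a $p'$-subgroup of $C_{q,\alpha,a,b}$ and hence lies in its unique Sylow $p'$-subgroup $Z_R^{(q)}$.
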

\begin{proof}

Let $T$ be a Sylow $p'$-subgroup of $\Aut(\P)$ that contains the 
$p'$-group $Z_R^{(q)}$. $T$ itself is (by Sylow's Theorem) contained in
a Sylow $p'$-subgroup of $\GL(\F_{p^n})^3$, say $U$. In particular, $T$ is abelian since all Sylow $p'$-subgroups of $\GL(\F_{p^n})^3$ are abelian, see~\cite[Proof of Lemma 5.8.]{comm_arxiv}.
This implies that $T$ is a subgroup of the centralizer 
$C_{q,\alpha,a,b}$ of $Z_R^{(q)}$ in $\Aut(\P)$. By assumption, $Z^{(q)}$ is an index 
$I$ subgroup of $C_{q,\alpha,a,b}$ and $p'$ does not divide $I$.
Moreover, $Z_R^{(q)}$ is a Sylow $p'$-subgroup of $Z^{(q)}$ and therefore
$p'\nmid [Z^{(q)}:Z_R^{(q)}] = I_1$. Let $[T:Z_R^{(q)}] = I_2 = p'^h$ for $h \ge 0$, 
since both are $p'$-groups. Since $I_2|I_1 I$,
and $p'\nmid I_1 I$, we must have $p'\nmid I_2$ and $I_2 = 1$. Thus,
$Z_R^{(q)}=T$ and $Z_R^{(q)}$ is a Sylow $p'$-subgroup of $\Aut(\P)$ as claimed.
\qed
\end{proof}

The following theorem is the main result that enables us to solve the isotopy question. It states that if two Taniguchi pre-semifields are isotopic (and a certain condition is satisfied), then there must exist an isotopism of a very simple form. Note that this does not prove that \emph{all} isotopisms necessarily have this structure. 

\begin{theorem} \label{thm:equivalence}
Let $\P_1=T(q_1,\alpha,a,b)$ and $\P_2=T(q_2,\alpha',a',b')$ be Taniguchi pre-semifields such that $0<k_1<m/2$ and $0<k_2\leq m/2$.
Assume that 
	\begin{equation}
		C_{q_1,\alpha,a,b} \text{ contains }Z^{(q_1)} \text{ as an index } I \text{ subgroup such that }p'\text{ does not divide }I.
	\tag{C}\label{eq:condition}
	\end{equation}
If $\P_1,\P_2$ are 
 isotopic, then there exists an 
 isotopism $\gamma=(N,L,M) \in \GL(\F_{p^n})^3$ such that all non-zero subfunctions of $L,M$ are monomials. Moreover, all non-zero subfuncions of $L$ and $M$ have the same degree. (The degree of the subfunctions of $L$ could be different than the degree of the subfunctions of $M$).
\end{theorem}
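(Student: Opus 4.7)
The plan is to exploit condition~\eqref{eq:condition} together with the Sylow-subgroup machinery of Lemmas~\ref{lem:zpsylow} and~\ref{lem:yoshiara} to promote an arbitrary isotopism between $\P_1$ and $\P_2$ into one whose middle and right components normalize $S_R$; once that is achieved, Lemma~\ref{lem:yoshiara}(a) will dictate their shape. First, I would combine condition~\eqref{eq:condition} with Lemma~\ref{lem:zpsylow} to deduce that $Z_R^{(q_1)}$ is a Sylow $p'$-subgroup of $\Aut(\P_1)$. Fixing any isotopism $\gamma_0\in\GL(\F_{p^n})^3$ between $\P_1$ and $\P_2$, Lemma~\ref{lem:conj_groups} gives $\gamma_0\Aut(\P_1)\gamma_0^{-1}=\Aut(\P_2)$, so $\gamma_0 Z_R^{(q_1)}\gamma_0^{-1}$ is a Sylow $p'$-subgroup of $\Aut(\P_2)$. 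By Lemma~\ref{lem_first}, $Z_R^{(q_2)}$ is itself a $p'$-subgroup of $\Aut(\P_2)$ (its order $|R|$ is a power of $p'$), hence it sits in some Sylow $p'$-subgroup of $\Aut(\P_2)$, which by Sylow's theorem is conjugate inside $\Aut(\P_2)$ to $\gamma_0 Z_R^{(q_1)}\gamma_0^{-1}$. Choosing such a $\delta\in\Aut(\P_2)$ and setting $\gamma=\delta\gamma_0$ produces an isotopism $\gamma=(N,L,M)$ between $\P_1$ and $\P_2$ (since $\delta\in\Aut(\P_2)$) satisfying
\[
Z_R^{(q_2)}\subseteq\gamma Z_R^{(q_1)}\gamma^{-1},
\]
with equality since both sides have order $|R|$.

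Next I would project this containment onto the middle component. Since the middle components of both $Z_R^{(q_1)}$ and $Z_R^{(q_2)}$ are precisely $S_R$, the containment says that for every $r\in R$ there exists $r'\in R$ with $\diag(m_r,m_r)=L\,\diag(m_{r'},m_{r'})\,L^{-1}$. Ranging over $r$ gives $S_R\subseteq L S_R L^{-1}$, and equality of cardinalities upgrades this to $L S_R L^{-1}=S_R$, i.e.\ $L\in N_{\GL(\F_{p^n})}(S_R)$; the identical argument on the right component yields $M\in N_{\GL(\F_{p^n})}(S_R)$. Lemma~\ref{lem:yoshiara}(a) then forces
\[
L=\begin{pmatrix} m_{c_1}\tau_L & m_{c_2}\tau_L \\ m_{c_3}\tau_L & m_{c_4}\tau_L\end{pmatrix},\qquad M=\begin{pmatrix} m_{d_1}\tau_M & m_{d_2}\tau_M \\ m_{d_3}\tau_M & m_{d_4}\tau_M\end{pmatrix}
\]
for some $c_i,d_i\in\F_{p^m}$ and $\tau_L,\tau_M\in\Gal(\F_{p^m}/\F_p)$. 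Each non-zero subfunction of $L$ is therefore the monomial $x\mapsto c_i\tau_L(x)$, all of the common degree prescribed by $\tau_L$, and analogously for $M$ with $\tau_M$, which is precisely the claim.

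The most delicate point is that when invoking Sylow's theorem to align $Z_R^{(q_2)}$ with $\gamma_0 Z_R^{(q_1)}\gamma_0^{-1}$, the conjugating element $\delta$ must be chosen inside $\Aut(\P_2)$ and not in the ambient $\GL(\F_{p^n})^3$; otherwise the modified $\gamma=\delta\gamma_0$ would cease to be an isotopism between $\P_1$ and $\P_2$. Once this is arranged, everything else is a clean projection onto the middle and right components and an immediate appeal to Lemma~\ref{lem:yoshiara}. Note that the argument is silent about the shape of $N$, which is consistent with the statement of the theorem.
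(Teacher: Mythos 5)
Your proof is correct and follows essentially the same route as the paper: condition~\eqref{eq:condition} plus Lemma~\ref{lem:zpsylow} makes $Z_R^{(q_1)}$ a Sylow $p'$-subgroup, Sylow conjugacy lets you modify the given isotopism by an autotopism so that it conjugates $Z_R^{(q_1)}$ onto $Z_R^{(q_2)}$, and projecting onto the middle and right components places $L,M$ in $N_{\GL(\F_{p^n})}(S_R)$, where Lemma~\ref{lem:yoshiara}(a) gives the monomial form. The only (immaterial) difference is that you perform the Sylow conjugation inside $\Aut(\P_2)$ and correct on the left, whereas the paper works inside $\Aut(\P_1)$ and corrects on the right.
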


\begin{proof}
	
By Lemma \ref{lem_first}, we have $Z_R^{(q_1)} \le \Aut(\P_1)$
and $Z_R^{(q_2)} \le \Aut(\P_2)$.
Assume $\P_1$ and $\P_2$ are isotopic via the isotopism $\delta \in \GL(\F_{p^n})^3$ 
that maps $\P_1$ to $\P_2$. 
Then $\delta^{-1} \Aut(\P_2) \delta = \Aut(\P_1)$ by Lemma~\ref{lem:conj_groups}. 
Observe that $|\delta^{-1}Z_R^{(q_2)} \delta|=|R|=|Z_R^{(q_1)}|$, 
so $Z_R^{(q_1)}$ and $\delta^{-1}Z_R^{(q_2)}\delta$ are 
Sylow $p'$-subgroups of  $\Aut(\P_1)$ by Lemma~\ref{lem:zpsylow} as long as 
Condition~\eqref{eq:condition} holds. In particular, these two subgroups are 
conjugate in $\Aut(\P_1)$ by Sylow's theorem, i.e., there exists a 
$\lambda \in \Aut(\P_1)$ such that 
\begin{equation}
\label{eq:conjugated}
\lambda^{-1} \delta^{-1} Z_R^{(q_2)} \delta \lambda = (\delta \lambda)^{-1} Z_R^{(q_2)} (\delta\lambda) = Z_R^{(q_1)}.
\end{equation}

Set $\gamma= (N,L,M)=\delta \lambda$. Note that $\gamma$ is an isotopism between 
$\P_1$ and $\P_2$ since $\lambda \in \Aut(\P_1)$. Eq.~\eqref{eq:conjugated} then 
immediately implies that 
\begin{align*}
\diag(m_{r^{q_2+1}},m_{r^{q_2^2+1}}) N &= N\diag(m_{s^{q_1+1}},m_{s^{q_1^2+1}}) \\
\diag(m_{r},m_{r}) L &= L \diag(m_{s},m_{s}) \\
\diag(m_{r},m_{r}) M &= M \diag(m_{s},m_{s}) 
\end{align*}
for all $r \in R$ and $s=\pi(r)$ where $\pi \colon R \rightarrow R$ is a permutation. 
In particular, $L$ and $M$ are in the normalizer of 
$S_R = \{\diag(m_{a},m_{a}) \colon a \in R\}$. 
By Lemma~\ref{lem:yoshiara}, all of the four subfunctions of $L$ and $M$  are zero or 
monomials of the same degree.
\qed
	\end{proof}

We will now systematically investigate isotopisms $(N,L,M)$ where the 
subfunctions of $L,M$ are monomials or zero. We want to emphasize that 
without this simplification, a treatment of the isotopy question is 
\emph{very} complicated, whereas the calculations we will do, while 
still technical, are much easier to handle. We also note that the 
remaining question on the crucial Condition~\eqref{eq:condition} is 
naturally answered along the way and does not require much additional work.

\section{Settling the isotopy question for Taniguchi semifields} \label{sec:isotopy}

We apply Theorem~\ref{thm:equivalence}. First, we achieve some further strong restrictions.

\begin{proposition}\label{prop:firststep}
Let $q_1=p^{k_1}$, $q_2=p^{k_2}$, 
\begin{align*}
\P_1&=T(q_1,\alpha,a,b)=(\F_{p^m} \times \F_{p^m},+,\circ_1), \textrm{ and},\\
\P_2&=T(q_2,\alpha',a',b')=(\F_{p^m} \times \F_{p^m},+,\circ_2)
\end{align*}
be Taniguchi pre-semifields such that $0< k_1<m/2$, $0<k_2\leq m/2$. Further, let $(N,L,M)$ be an isotopism between $\P_1,\P_2$ such that all non-zero subfunctions of $L,M$ are monomials of the same degree. Then $N_2,N_3,L_2,L_3,M_2,M_3=0$, all other subfunctions are monomials of the same degree, and $k_1=k_2$.
\end{proposition}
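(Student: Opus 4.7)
The plan is to turn the isotopism equation $N(x\circ_1 y)=L(x)\circ_2 M(y)$ into a polynomial identity in the formal variables $x,y,u,v\in\F_{p^m}$ (with exponents reduced modulo $p^m-1$), substitute $L_i(z)=\ell_iz^{p^s}$ and $M_i(z)=m_iz^{p^t}$ for scalars $\ell_i,m_i\in\F_{p^m}$ (possibly zero), and write each $N_j(z)=\sum_ic_{j,i}z^{p^i}$ as a general linearized polynomial. Comparing coefficients of each monomial $x^ay^bu^cv^d$ then translates the identity into a system of constraints on the scalars and on the exponent parameters $s,t,k_1,k_2$.

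I would begin with the second coordinate identity
\[
N_3(A)+N_4(B)=XV^{q_2^2}+Y^{q_2^2}U,
\]
where $A,B$ are the coordinates of $(x,y)\circ_1(u,v)$ and $X,Y,U,V$ are the components of $L(x,y),M(u,v)$. Every monomial on either side involves exactly one variable from $\{x,y\}$ and one from $\{u,v\}$, so the identity splits into four independent sub-identities indexed by the variable pair ($xu,xv,yu,yv$). Two rigidity observations drive the analysis: (i) a monomial in $N_3(A)$ has $p$-exponent gap $k_1$ between its two variables, whereas a monomial in $N_4(B)$ has gap $2k_1$; (ii) because $0<k_1<m/2$ we have $2k_1\not\equiv 0\pmod m$, which guarantees that the monomials produced from different indices $i$ in the linearized expansion of $N_j$ are pairwise distinct and cannot cancel. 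Moreover the $xu$ and $yv$ sub-identities receive no contribution from $N_4(B)$ at all, which makes them especially restrictive.

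The main obstacle I anticipate is to establish $\ell_2=\ell_3=m_2=m_3=0$, i.e.\ that $L$ and $M$ are block-diagonal. For this I would assume one of these scalars is non-zero and trace the resulting off-diagonal RHS monomial through the relevant sub-identity. For instance, $\ell_2\ne 0$ introduces the term $\ell_2m_4^{q_2^2}y^{p^s}v^{p^{t+2k_2}}$ into the $yv$ sub-identity; since the LHS contribution comes only from $N_3(A)$ (gap $k_1$), this forces an exponent relation of the form $s\equiv t+2k_2\pm k_1\pmod m$. Combining such relations coming from the $xu$ sub-identity (gap $k_1$ only) and from the $xv,yu$ sub-identities (which mix gaps $k_1$ and $2k_1$) produces incompatibilities except in degenerate cases ruled out by the excluded value $k_1\ne m/2$ and by the non-degeneracy of the Taniguchi parameters ($\alpha\ne 0$ and $P_{q_1,a,b}$ having no roots in $\F_{p^m}$). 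A symmetric argument disposes of $\ell_3,m_2,m_3$.

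Once $L$ and $M$ are diagonal the proof concludes cleanly. The RHS of the second coordinate identity collapses to $\ell_1m_4^{q_2^2}x^{p^s}v^{p^{t+2k_2}}+\ell_4^{q_2^2}m_1y^{p^{s+2k_2}}u^{p^t}$, which has no $xu$ or $yv$ monomials; since the LHS $xu$ and $yv$ monomials come only from $N_3(A)$ and are pairwise distinct, every $c_{3,j}$ vanishes and $N_3=0$. Matching the remaining $xv$ and $yu$ monomials of $N_4(B)$ against the RHS then forces the support of $\{d_{4,j}\}$ to lie simultaneously at $j=s$ and at $j=t$, so $s\equiv t\pmod m$, and both resulting exponent relations reduce to $2k_1\equiv 2k_2\pmod m$. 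Combined with $0<k_1<m/2$ and $0<k_2\le m/2$ this yields $k_1=k_2$. An identical analysis of the first coordinate identity (now with $q_1=q_2$ and $L,M$ diagonal) gives $N_2=0$ and shows that $N_1,N_4$ are monomials of the common degree $p^s=p^t$, matching the degree of the non-zero subfunctions of $L$ and $M$.
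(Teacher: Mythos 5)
Your overall strategy is the same as the paper's: expand $N((x,y)\circ_1(u,v))=L(x,y)\circ_2 M(u,v)$ as a polynomial identity, work first in the second coordinate, split by the variable pairs $xu,xv,yu,yv$, and exploit that $N_3(A)$ produces exponent gaps $\pm k_1$ while $N_4(B)$ produces gaps $\pm 2k_1$ and the right-hand side produces gaps built from $2k_2$. Your endgame (once $L,M$ are diagonal: $N_3=0$ from the empty $xu$/$yv$ parts, $s=t$ and $2k_1\equiv 2k_2$ from the $xv$/$yu$ parts, hence $k_1=k_2$ by the range restrictions, then $N_2=0$ and $N_1$ monomial from the first coordinate) is correct and matches the paper.

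The gap is in the middle, which is where all the real work sits: you assert that assuming some off-diagonal coefficient is nonzero "produces incompatibilities except in degenerate cases ruled out by the excluded value $k_1\ne m/2$ and by the non-degeneracy of the Taniguchi parameters," but you never carry out the case analysis, and the reasons you cite do not suffice. Concretely, take the fully anti-diagonal configuration $\ell_1=\ell_4=m_1=m_4=0$, $\ell_2\ell_3 m_2 m_3\neq 0$. Then the $xu$ and $yv$ parts of the right-hand side of the second-coordinate identity vanish identically (so they give you $N_3=0$ but no contradiction), and the $xv$/$yu$ parts are matched by a monomial $N_4$ precisely when $s=t$ and $2(k_1+k_2)\equiv 0\pmod m$ — which is satisfiable with $k_1+k_2=m/2$ (e.g.\ $m=6$, $k_1=1$, $k_2=2$), and is excluded neither by $k_1\neq m/2$ nor by $\alpha\neq 0$ nor by $P_{q_1,a,b}$ having no roots. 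Eliminating this configuration forces you into the first-coordinate identity and a further coefficient analysis, none of which appears in your sketch. (Note also that your claim that $\ell_2\neq 0$ "introduces the term $\ell_2 m_4^{q_2^2}y^{p^s}v^{p^{t+2k_2}}$" silently assumes $m_4\neq 0$; the case distinction over which of the eight coefficients of $L,M$ vanish is exactly the content you are skipping.) For comparison, the paper avoids some of this by first assuming $N_3\neq 0$, showing this forces \emph{all eight} coefficients of $L,M$ to be nonzero, and deriving a contradiction from the resulting extra $yu$ monomials — though even there the residual case reduces to the congruence $2(k_1+k_2)\equiv 0\pmod m$ rather than $k_1\equiv -k_2\pmod m$, so this boundary situation deserves explicit care in any write-up.
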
 
\begin{proof}
	Say the degree of the non-zero subfunctions of $L$ and $M$ is $p^{t_2}$ and $p^{t_3}$, respectively. Then, for all $(x,y),(u,v)\in\F_{p^m}^2$,
\begin{align*}
L(x,y) \circ_2 M(u,v) &= (a_2x^{p^{t_2}}+b_2y^{p^{t_2}},c_2x^{p^{t_2}}+d_2y^{p^{t_2}}) \circ_2 
                        (a_3u^{p^{t_3}}+b_3v^{p^{t_3}},c_3u^{p^{t_3}}+d_3v^{p^{t_3}})\\
	                   &= (h_1(x,y,u,v), h_2(x,y,u,v))
\end{align*}
	for some $a_2,b_2,c_2,d_2,a_3,b_3,c_3,d_3 \in \F_{p^m}$. 
	
	Hence, 
				\begin{align}
		h_2&= (a_2x^{p^{t_2}}+b_2y^{p^{t_2}})(c_3u^{p^{t_3}}+d_3v^{p^{t_3}})^{q_2^2} +(c_2x^{p^{t_2}}+d_2y^{p^{t_2}})^{q_2^2}(a_3u^{p^{t_3}}+b_3v^{p^{t_3}}).\label{eq:degrees3}
	\end{align}
	
	 We also have
	\begin{align*}
		N((x,y)\circ_1(u,v)) = (\ast, &N_{3}(x^{q_1}u+\alpha^{q_1^2} xu^{q_1}-a(xv^{q_1}-\alpha^{q_1} uy^{q_1})-b(y^{q_1}v+\alpha yv^{q_1}))\\
		                              &+N_{4}(xv^{q_1^2}+y^{q_1^2}u)).
	\end{align*}
	 Set $N((x,y)\circ_1(u,v)) = L(x,y)\circ_2 M(u,v)$. Let us assume $N_3 \neq 0$, i.e. the second component contains a term 
	\begin{equation}
		c(x^{q_1}u+\alpha^{q_1^2} xu^{q_1}-a(xv^{q_1}-\alpha^{q_1} uy^{q_1})-b(y^{q_1}v+\alpha yv^{q_1}))^{p^t}.
	\label{eq:term1}
	\end{equation}

		Note that none of these terms can be 
	 canceled out by $N_4(xv^{q_1^2}+y^{q_1^2}u)$. Thus, those monomials also have to occur in $h_2$. Let us consider the monomials $x^{p^{k_1+t}}u^{p^t}$, $x^{p^{t}}u^{p^{k_1+t}}$. Those appear in $h_2$ if and only if $t_2=k_1+t$, $t_3+2k_2=t$, $t_2+2k_2=t$, $t_3=k_1+t$, $k_1 \equiv -2k_2 \pmod m$, or $t_2=t_3=t$, $k_1 \equiv 2k_2 \pmod m$. In both cases we get $t_2=t_3$ and $t$ is uniquely determined by $t_2$, so $N_3$ is a monomial. In order for all monomials in Eq.~\eqref{eq:term1} to occur in $h_2$, it is necessary that $a_2a_3b_2b_3c_2c_3d_2d_3\neq 0$, but then $h_2$ will also contain the terms $y^{p^{t_2}}u^{p^{t_3+2k_2}}$ and $y^{p^{t_2+2k_2}}u^{p^{t_3}}$ which cannot both occur in the second component of $N((x,y)\circ_1(u,v))$ (again, by the choice of $t_2,t_3$ and the conditions on $k_1,k_2$ it is impossible that those terms are covered by $N_4(xv^{q_1^2}+y^{q_1^2}u)$). We infer $N_3=0$.
	
	Thus 
	\[N((x,y)\circ_1(u,v)) =(\ast,N_{4}(xv^{q_1^2}+y^{q_1^2}u)).\]
	Comparing with Eq.~\eqref{eq:degrees3} immediately yields that $N_4$ is a monomial, say of degree $p^t$, and either $t=t_2=t_3$, $q_1=q_2$, $b_2=c_2=b_3=c_3=0$ or $t_2+2k_2=t$, $t_3=t+2k_1$, $t+2k_1=t_2$, $t=t_3+2k_2$. The second case leads to $t_2=t_3$ and $k_1\equiv -k_2 \pmod m$, which is by our condition $ 0<k_1<m/2$, $0<k_2\leq m/2$ impossible. So $q_1=q_2$, $t=t_2=t_3$, $b_2=c_2=b_3=c_3=0$ (implying $L_2=L_3=M_3=M_3=0$).
	
Let us now check the first component. We have
\begin{align*}
N((x,y)\circ_1(u,v)) = (N_{1}( & x^{q_1}u+\alpha^{q_1^2} xu^{q_1} \\
                                &-a_1(xv^{q_1}-\alpha^{q_1} uy^{q_1})\\
																&-b_1(y^{q_1}v+\alpha yv^{q_1}) )\\
											  +N_{2}&(xv^{q_1^2}+y^{q_1^2}u),\ast)
\end{align*}
	and
\begin{align*}
		h_2 = &C_1x^{p^{k_1+t_2}}u^{p^{t_2}}+\alpha'^{q_1^2}C_2 x^{p^{t_2}}u^{p^{k_1+t_2}}\\
		      &-a'(C_3x^{p^{t_2}}v^{p^{k_1+t_2}}-\alpha'^{q_1}C_4 u^{p^{t_2}}y^{p^{k_1+t_2}})\\
					&-b'(C_5y^{p^{k_1+t_2}}v^{p^{t_2}}+\alpha' C_6y^{p^{t_2}}v^{p^{k_1+t_2}})
	\end{align*}	
	for non-zero coefficients $C_1,\dots,C_6$.
	A comparison of degrees immediately shows $N_2=0$ and that $N_1$ is a monomial of degree $p^t$ as desired.
	\qed
\end{proof}

In the next step, we determine the remaining subfunctions. This also immediately verifies Condition~\eqref{eq:condition}.

\begin{proposition} \label{prop:2}
	Let $q_1=p^{k_1}$, $q_2=p^{k_2}$,
\begin{align*}
\P_1&=T(q_1,\alpha,a,b)=(\F_{p^m} \times \F_{p^m},+,\circ_1), \textrm{ and},\\
\P_2&=T(q_2,\alpha',a',b')=(\F_{p^m} \times \F_{p^m},+,\circ_2)
\end{align*}
be Taniguchi pre-semifields such that $0< k_1<m/2$ and $0<k_2\leq m/2$.  Further, let $(N,L,M)$ be an isotopism between $\P_1,\P_2$ such that all non-zero subfunctions of $M,N$ are monomials of the same degree. Then
	\begin{itemize}
		\item $a=a'=0$, $\alpha^{p^t}/\alpha'$  is a $(q-1)$-st power  in $\F_{p^m}^*$ and $b'^{p^t}/b$ is a $(q+1)$-st power in $\F_{p^m}^*$ for some $0\leq t \leq m-1$, or
		\item $a=a'=1$, $\alpha^{p^t}/\alpha'$  is a $(q-1)$-st power in $\F_{p^m}^*$ and $b=b'^{p^t}$ for some $0\leq t \leq m-1$.
	\end{itemize}
	
	Moreover, \[|C_{q_1,\alpha,a,b}|=\begin{cases}
	(p^{\gcd(k,m)}-1)(p^m-1) & \text{ if }a \neq 0 \\
	(p^{\gcd(k,m)}-1)(p^m-1)\cdot \gcd(p^m-1,p^k+1) & \text{ if }a=0.
\end{cases}\]
\end{proposition}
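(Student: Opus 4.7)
The plan is to expand the isotopism $N((x,y) \circ_1 (u,v)) = L(x,y) \circ_2 M(u,v)$ using the diagonal, monomial structure supplied by Proposition~\ref{prop:firststep}. Write the subfunctions $L_i, M_i, N_i$ for $i \in \{1,4\}$ as $z \mapsto A_i z^{p^t}$, $z \mapsto B_i z^{p^t}$, $z \mapsto C_i z^{p^t}$ respectively, with $q_1 = q_2 = q$; comparing coefficients yields eight equations, one per monomial $x^qu$, $xu^q$, $xv^q$, $uy^q$, $y^qv$, $yv^q$ in the first component, and one each for $xv^{q^2}$ and $y^{q^2}u$ in the second component. The two second-component equations give the crucial relation $C_4 = A_1 B_4^{q^2} = A_4^{q^2} B_1$.

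The $(q-1)$-st power condition on $\alpha^{p^t}/\alpha'$ emerges from two natural ratios: the $x^q u$/$xu^q$ ratio gives $(\alpha^{p^t}/\alpha')^{q^2} = (B_1/A_1)^{q-1}$, and the $y^q v$/$yv^q$ ratio gives $\alpha^{p^t}/\alpha' = (B_4/A_4)^{q-1}$. Since the subgroup of $(q-1)$-st powers in $\F_{p^m}^*$ is closed under the Frobenius $x \mapsto x^{q^2}$, both identities yield the same conclusion. A short monomial-matching argument then forces $a = 0 \iff a' = 0$. When $a = a' = 1$, combining the $a$-equations with $C_1 = A_1^q B_1$ and the second-component relation forces $A_4^q B_4 = C_1$; substituting into the $y^q v$-equation then yields $b' = b^{p^t}$, i.e., the advertised $b = b'^{p^t}$ after a relabeling of $t$.

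In the case $a = a' = 0$ the $a$-equations vanish, leaving more freedom. Substituting $A_1 = (A_4/B_4)^{q^2} B_1$ (from the second component) into $b'/b^{p^t} = A_1^q B_1/(A_4^q B_4)$ and factoring carefully yields
\[
b^{p^t}/b' = \left[(A_4/B_4)^{-q(q-1)}(B_4/B_1)\right]^{q+1},
\]
exhibiting the desired $(q+1)$-st power structure. This factoring step is the most delicate part of the proof and the main obstacle to overcome.

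For the count $|C_{q_1,\alpha,a,b}|$, specialize to $\P_1 = \P_2 = T(q,\alpha,a,b)$. By Lemma~\ref{lem:yoshiara}(b), any element $(N,L,M)$ of the centralizer has $L, M$ with subfunctions given by multiplications by field elements (so $t = 0$); a monomial-matching argument in the second component analogous to that in Proposition~\ref{prop:firststep} then forces $L, M$ (and hence $N$) to be diagonal. With $t = 0$ and $\alpha = \alpha'$, $a = a'$, $b = b'$, the system reduces to $A_1/B_1, A_4/B_4 \in \F_{p^{\gcd(k,m)}}^*$ with $A_1/B_1 = A_4/B_4$; in the case $a \neq 0$ the $a$-equations additionally impose $B_1 = B_4$ (so $A_1 = A_4$), producing $(p^{\gcd(k,m)} - 1)(p^m - 1)$ autotopisms. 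In the case $a = 0$, the constraint $B_1 = B_4$ is relaxed to $(B_1/B_4)^{q+1} = 1$, contributing an extra factor of $\gcd(p^k + 1, p^m - 1)$, matching the stated formula.
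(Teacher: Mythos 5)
Your proposal is correct and follows essentially the same route as the paper: invoke Proposition~\ref{prop:firststep} to reduce to diagonal isotopisms with monomial subfunctions, compare the eight coefficients (six in the first component, two in the second), and extract the $(q-1)$-st and $(q+1)$-st power conditions before specializing to $\P_1=\P_2$, $t=0$ for the centralizer count. Your explicit factorization of $b^{p^t}/b'$ as a $(q+1)$-st power replaces the paper's auxiliary elements $\gamma,\gamma_2$, and your silent $t\mapsto m-t$ relabeling at the end mirrors the same step taken implicitly in the paper, so the two arguments are equivalent in substance.
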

\begin{proof}
	From Proposition~\ref{prop:firststep}, we infer that $L_2,L_3,N_2,N_3,M_2,M_3=0$, all other subfunctions are monomials of the same degree $p^t$, and $q_1=q_2=:p^k=q$. 
	
	Set $N_1=a_1x^{p^{t}}$, $N_4=d_1x^{p^{t}}$. Then
	\begin{equation}
N((x,y)\circ_1(u,v))=(a_1(x^qu+\alpha^{q^2}xu^q-a(xv^q-\alpha^q y^qu)-b(y^qv+\alpha yv^q))^{p^{t}},d_1(xv^{q^2}+y^{q^2}u)^{p^{t}}).
\label{eq:tan_RHS}
\end{equation}
Likewise, the subfunctions of $L$ and $M$ are monomials of degree $p^t$, so
	\[L((x,y))\circ_2 M((u,v))=(a_2x^{p^{t}},d_2y^{p^{t}})\circ_2(a_3u^{p^{t}},d_3v^{p^{t}}).\]
	
Thus
\[
L(x,y)\circ_2 M(u,v) = (A_1(x,y,u,v),A_2(x,y,u,v))
\]
where
\begin{align*}
A_1(x,y,u,v) =& (a_2x)^{p^{t+k}}(a_3u)^{p^{t}}+\alpha'^{q^2}(a_2x)^{p^{t}}(a_3u)^{p^{t+k}}\\
              & -a'\left((a_2x)^{p^{t}}(d_3v)^{p^{t+k}}-\alpha'^q (a_3u)^{p^{t}}(d_2y)^{p^{t+k}}\right)\\
			  & -b'\left((d_2y)^{p^{t+k}}(d_3v)^{p^{t}}+\alpha' (d_2y)^{p^{t}}(d_3v)^{p^{t+k}}\right),
\end{align*}
and
\[
A_2(x,y,u,v) = (a_2x)^{p^{t}}(d_3v)^{p^{t+2k}}+(a_3u)^{p^{t}}(d_2y)^{p^{t+2k}}.
\]

A comparison with Eq.~\eqref{eq:tan_RHS} yields for all possible terms $(x^qu)^{p^t}$, $(xu^q)^{p^t}$, $(xv^q)^{p^t}$, $(y^qu)^{p^t}$, $(y^qv)^{p^t}$, $(yv^q)^{p^t}$ in the first component and the two terms in the second component the following 8 equations:

 \noindent\begin{minipage}{0.5\textwidth}
\begin{align}
	a_1&=(a_2^{q}a_3)^{p^t} \label{eq:t_l1}\\
	a_1 \alpha^{q^2+p^t} &= \alpha'^{q^2}(a_2a_3^q)^{p^{t}}\label{eq:t_l2}\\	
	a_1a &= a'(a_2d_3^q)^{p^t}\label{eq:t_l3}\\
	a_1a\alpha^{q+p^t} &=a'\alpha'^q(a_3d_2^q)^{p^t}\label{eq:t_l4} 
\end{align}
    \end{minipage}%
    \begin{minipage}{0.5\textwidth}
	\begin{align}
		a_1b^{p^t} &= b'(d_2^qd_3)^{p^t}\label{eq:t_r1} \\
		a_1b^{p^t}\alpha^{p^t} &= b'\alpha' (d_2d_3^q)^{p^t} \label{eq:t_r2}\\
		d_1 &= (a_2d_3^{q^2})^{p^{t}} \label{eq:t_r3}\\
		d_1 &= (a_3d_2^{q^2})^{p^{t}}.\label{eq:t_r4}
	\end{align}
    \end{minipage}\vskip1em
	Eq.~\eqref{eq:t_l3} can only be satisfied if $a=a'=0$ or $a=a'=1$, so we only need to consider these two cases.
	
 Substituting Eq.~\eqref{eq:t_l1} into Eq.~\eqref{eq:t_l2} yields $(a_2^qa_3)^{p^{t}} (\alpha^{p^t}/\alpha')^{q^2} = (a_2a_3^q)^{p^t}$ which leads to 
	\begin{equation}
			a_2^{q-1}(\alpha^{p^t}/\alpha')^{p^{2k-t}} = a_3^{q-1}.
	\label{eq:t_q1}
	\end{equation}

	In particular, $\alpha^{p^t}/\alpha'$ must be a $(q-1)$-st power. We set $a_3 = a_2 \gamma$, where $\gamma^{q-1} = (\alpha^{p^t}/\alpha')^{p^{2k-t}}$. 
	Similarly, substituting Eq.~\eqref{eq:t_r1} into Eq.~\eqref{eq:t_r2} yields
		\[d_2^{q-1}(\alpha^{p^t}/\alpha')^{p^{m-t}} = d_3^{q-1},\]
		and we set $d_3=d_2 \gamma_2$ where $\gamma_2^{q-1} = (\alpha^{p^t}/\alpha')^{p^{m-t}}$. 
		Comparing now Eq.~\eqref{eq:t_r3} with Eq.~\eqref{eq:t_r4} gives $a_2d_2^{q^2} \gamma_2^{q^2} = a_2d_2^{q^2} \gamma$, that is $\gamma=\gamma_2^{q^2}$. 
		A comparison between Eq.~\eqref{eq:t_l1} and  Eq.~\eqref{eq:t_r1} yields 
		\begin{align}
				(b'^{p^{m-t}}/b) & = (a_2/d_2)^{q+1} \gamma/\gamma_2 = (a_2/d_2)^{q+1} \gamma_2^{q^2-1} \nonumber\\
				 & = 	(a_2/d_2)^{q+1} (\alpha^{p^t}/\alpha')^{p^{m-t}\cdot (q+1)}.	\label{eq:t_condition}
		\end{align}

		Thus $(b'^{p^{m-t}}/b)$ must also be a $(q+1)$-st power; in other words $b$ and $b'^{p^{m-t}}$ have to be in the same coset of the subgroup of $(q+1)$-st powers in $\F_{p^m}^*$. 

			We now consider the case $a=a'=0$. Then Eq.~\eqref{eq:t_l3} and Eq.~\eqref{eq:t_l4} always hold and the conditions we have gathered so far cover all equations. We can thus find an isotopism between $T(q,\alpha,0,b)$ and $T(q,\alpha',0,b')$ if and only if $\alpha^{p^t}/\alpha'$ is a $(q-1)$-st power and $(b'^{p^{t}}/b)$ is a $(q+1)$-st power for some $t \in \mathbb{N}$.
			
			Now consider the case $a=a'=1$. Of course, all previously derived constraints still apply, and Eq.~\eqref{eq:t_l3} and Eq.~\eqref{eq:t_l4} give two additional conditions. We first rewrite Eq.~\eqref{eq:t_l3} with Eq.~\eqref{eq:t_l1}. The result is $a_2d_3^q=a_2^qa_3$ and, using Eq.~\eqref{eq:t_q1}, we get
			\[d_3^q = \frac{a_3^q}{\gamma^{q-1}}=a_3^q(\alpha'/\alpha^{p^t})^{p^{2k-t}}.\]
			Similarly, rewriting Eq.~\eqref{eq:t_l4} with Eq.~\eqref{eq:t_l1} yields
			\[a_2^q \alpha^{p^k}/\alpha'^{p^{k-t}} = d_2^q. \]
			We show that these two statements are equivalent under the previous conditions. Indeed, we have 
			\begin{align*}
				d_3^q &= a_3^q(\alpha'/\alpha^{p^t})^{p^{2k-t}} \\
				\Leftrightarrow d_2^q \gamma_2^q &= a_2^q \gamma^q (\alpha'/\alpha^{p^t})^{p^{2k-t}} \\
				\Leftrightarrow d_2^q &= a_2^q \gamma_2^{q^3-q}(\alpha'/\alpha^{p^t})^{p^{2k-t}} = a_2^q \left((\alpha^{p^t}/\alpha')^{p^{m-t}}\right)^{q(q+1)} (\alpha'/\alpha^{p^t})^{p^{2k-t}} \\
				\Leftrightarrow d_2^q &= a_2^q (\alpha^{p^t}/\alpha')^{p^{k-t}}=a_2^q \alpha^{p^k}/\alpha'^{p^{k-t}}.
			\end{align*} 
			Substituting this condition into Eq.~\eqref{eq:t_condition} gives 
				\[(b'^{p^{m-t}}/b) = (\alpha'/\alpha^{p^t})^{p^{m-t}\cdot (q+1)}  (\alpha^{p^t}/\alpha')^{p^{m-t}\cdot (q+1)} = 1.\]
			We conclude that for fixed $\alpha,\alpha',t$ with $\alpha^{p^t}/\alpha'$ a $(q-1)$-st power, the presemifields $T(q,\alpha,1,b)$ and $T(q,\alpha',1,b')$ are isotopic if and only if $b=b'^{p^{t}}$ for some $0\leq t \leq m-1$. 	\\
			
			It remains to prove the statement on the centralizer. By Lemma~\ref{lem:yoshiara}, we only have to check autotopisms where the subfunctions of $L,M$ are monomials of degree $1$. This is a special case of this proposition, which is realized by setting $\P_1=\P_2$ and $t=0$. To compute the size of the centralizer, we have to go through our previous calculations and count the autotopisms of this form. For $a=0$, we have $a_3 = a_2 \gamma$, $d_3=d_2 \gamma^{\overline{q}^2}$ and $a_2^{q+1}=d_2^{q+1}$, where $\gamma$ is $(q-1)$-st root of unity (see Eqs.~\eqref{eq:t_q1} and~\eqref{eq:t_condition}), and all other coefficients are uniquely determined from that. So there are in total $p^m-1$ choices for $a_2$, $p^{\gcd(k,m)}-1$ choices of $\gamma$ and $\gcd(p^{k}+1,p^m-1)$ choices for $d_2$.
			
			For $a=1$, we have again $a_3 = a_2 \gamma$ and the additional conditions determine all other coefficients uniquely. So the centralizer has size $(p^m-1)(p^{\gcd(k,m)}-1)$ because there are again $p^m-1$ choices for $a_2$ and $p^{\gcd(k,m)}-1$ choices of $\gamma$. 
			\qed
\end{proof}

We are now able to piece everything together in the following result:

\begin{theorem} \label{thm:tani_isotopy}
Let $q_1=p^{k_1}$, $q_2=p^{k_2}$, 
\begin{align*}
\P_1&=T(q_1,\alpha,a,b)=(\F_{p^m} \times \F_{p^m},+,\circ_1), \textrm{ and},\\
\P_2&=T(q_2,\alpha',a',b')=(\F_{p^m} \times \F_{p^m},+,\circ_2)
\end{align*}
be Taniguchi pre-semifields such that 
$k_1 \neq m/2$ and $a,a' \in \{0,1\}$.
 If $k_2\equiv -k_1 \pmod m$ then, for fixed $\alpha',a',b'$, there exist $\alpha,a,b$ such that $\P_1$ and $\P_2$ are isotopic. 	If $k_1 \not\equiv k_2 \pmod m$ and $k_1\not\equiv -k_2 \pmod m$, the pre-semifields $\P_1$ and $\P_2$ are not isotopic.

	$T(q,\alpha,a,b)$ and $T(q,\alpha',a',b')$ are isotopic if and only if one of the two following cases occurs:
	\begin{itemize}
		\item $a=a'=0$, $\alpha^{p^t}/\alpha'$  is a $(q-1)$-st power in $\F_{p^m}^*$ and $b'^{p^t}/b$ is a $(q+1)$-st power in $\F_{p^m}^*$ for some $0\leq t \leq m-1$.
		\item $a=a'=1$, $\alpha^{p^t}/\alpha'$  is a $(q-1)$-st power in $\F_{p^m}^*$ and $b=b'^{p^t}$ for some $0\leq t \leq m-1$.
	\end{itemize}
\end{theorem}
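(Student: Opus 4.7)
The strategy is to combine Proposition~\ref{prop:minus} with Theorem~\ref{thm:equivalence} and Propositions~\ref{prop:firststep} and~\ref{prop:2}. The three assertions of the theorem will be handled in turn, after first verifying Condition~\eqref{eq:condition} in the generality required.

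\emph{Verifying Condition~\eqref{eq:condition}.} For any Taniguchi pre-semifield $T(q,\alpha,a,b)$ with $0 < k < m/2$, Proposition~\ref{prop:2} gives the index $I = [C_{q,\alpha,a,b} : Z^{(q)}]$ as either $p^{\gcd(k,m)} - 1$ (when $a \neq 0$) or $(p^{\gcd(k,m)} - 1)\gcd(p^m - 1, p^k + 1)$ (when $a = 0$). The Zsigmondy prime $p'$ divides no $p^d - 1$ with $0 < d < m$, hence does not divide $p^{\gcd(k,m)} - 1$; moreover, $p^k + 1$ divides $p^{2k} - 1$ and $2k < m$, so $p' \nmid p^k + 1$ either. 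Thus $p' \nmid I$, and Condition~\eqref{eq:condition} holds.

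\emph{The three cases.} First, when $k_2 \equiv -k_1 \pmod m$, Proposition~\ref{prop:minus} applied to $\P_2$ yields an isotopic Taniguchi pre-semifield with first parameter $p^{m - k_2} = p^{k_1}$, together with explicit formulas for $\alpha, a, b$ in terms of $\alpha', a', b'$; rescaling if needed to force $a \in \{0,1\}$ (as discussed after the definition of the Taniguchi family) finishes this case, and the correspondence $a = 0 \Leftrightarrow a' = 0$ is immediate from the formulas. Next, for the non-isotopy claim, suppose for contradiction that $\P_1, \P_2$ are isotopic with $k_1 \not\equiv \pm k_2 \pmod m$. Using Proposition~\ref{prop:minus}, I reduce to $0 < k_1 < m/2$ and $0 < k_2 \leq m/2$ while preserving the hypothesis (in this range the two congruences $k_1 \equiv \pm k_2 \pmod m$ both reduce to $k_1 = k_2$, since $k_1 + k_2 < m$). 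Theorem~\ref{thm:equivalence} then applies, and Proposition~\ref{prop:firststep} forces $k_1 = k_2$, the desired contradiction. Finally, for the same-$q$ characterization with $k_1 = k_2 = k$ and $0 < k < m/2$, Theorem~\ref{thm:equivalence} together with Propositions~\ref{prop:firststep} and~\ref{prop:2} gives the \emph{only if} direction. For the converse, one unwinds the system~\eqref{eq:t_l1}--\eqref{eq:t_r4} solved inside the proof of Proposition~\ref{prop:2}: given parameters satisfying the stated $(q-1)$-st and $(q+1)$-st power conditions, choosing $a_2 \in \F_{p^m}^*$ freely, setting $a_3 = a_2\gamma$ for a fixed $(q-1)$-st root $\gamma$ of $(\alpha^{p^t}/\alpha')^{p^{2k-t}}$, and solving the remaining linear relations for $d_2, d_3, a_1, d_1$ produces an explicit isotopism.

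\emph{Main obstacle.} The delicate step is the verification of Condition~\eqref{eq:condition}; this is where the exclusion $2k = m$ is essential, because otherwise $p^k + 1$ would divide $p^m - 1$ and might well contain the Zsigmondy prime $p'$, breaking the Sylow argument underlying Lemma~\ref{lem:zpsylow} and the entire reduction. Once Condition~\eqref{eq:condition} is in hand, the rest of the argument is essentially book-keeping driven by the earlier propositions, together with the range-reduction trick from Proposition~\ref{prop:minus} that confines both exponents to $(0, m/2]$.
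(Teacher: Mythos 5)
Your proposal is correct and follows essentially the same route as the paper: verify Condition~\eqref{eq:condition} via the centralizer sizes from Proposition~\ref{prop:2} and the Zsigmondy prime (using $2k_1\neq m$ to rule out $p'\mid p^{k_1}+1$), dispose of the $k_2\equiv -k_1$ case with Proposition~\ref{prop:minus}, reduce to $0<k_1,k_2\le m/2$, and then invoke Theorem~\ref{thm:equivalence} with Propositions~\ref{prop:firststep} and~\ref{prop:2}. Your explicit remark that the converse direction requires solving the system~\eqref{eq:t_l1}--\eqref{eq:t_r4} is a point the paper leaves implicit inside the proof of Proposition~\ref{prop:2}, but it is not a different method.
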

\begin{proof}
Let us first check Condition~\eqref{eq:condition} from Theorem~\ref{thm:equivalence}. By Proposition~\ref{prop:2}, 	
\[
|C_{q_1,\alpha,a,b}|\in \{(p^{\gcd(k_1,m)}-1)(p^m-1) ,(p^{\gcd(k_1,m)}-1)(p^m-1)\cdot \gcd(p^m-1,p^{k_1}+1)\}. 
\]
Observe that $p'\nmid  (p^{\gcd(k_1,m)}-1)$ since $p'$ is a $p$-primitive divisor and $p'\nmid p^{k_1}+1$ since otherwise $p'|(p^{k_1}+1)(p^{k_1}-1)=p^{2k_1}-1$ which is not possible since $2k_1\neq m$ and, again, $p'$ is a $p$-primitive divisor. So $(p^m-1)p'\nmid |C_{q_1,\alpha,a,b}|$ and Condition~\eqref{eq:condition} is satisfied.

Assume $\P_1$ and $\P_2$ are isotopic. Then there is an isotopism $(N,L,M)$ between $\P_1$ and $\P_2$ with the properties stated in Theorem~\ref{thm:equivalence}. We already dealt with the case $k_1 \equiv -k_2 \pmod m$ in Proposition~\ref{prop:minus}, so we can assume $k_2< m/2$. The statement then follows from Proposition~\ref{prop:2}.\qed
\end{proof}
\section{Counting the number of non-isotopic Taniguchi semifields} \label{sec:counts}
To count the number of Taniguchi (pre-)semifields, we need a famous result by Bluher~\cite{Bluher} on projective polynomials and a well known basic lemma.

\begin{theorem}[{\cite[Theorem 5.6.]{Bluher}}] \label{bluher}
	Let $q=p^k$ and denote by $N(p,m)$ the number of polynomials $P(x)=x^{q+1}+x+b$ with $b \in \F_{p^m}$ such that $P$ does not have a root in $\F_{p^m}$. Set $d=\gcd(k,m)$ and $l=m/d$. Then
	\[N(p,m)=\begin{cases}
		\frac{p^{m+d}-p^d}{2(p^d+1)} & \text{ if } l \text{ is even,}\\
		\frac{p^{m+d}-1}{2(p^d+1)} & \text{ if } p,l \text{ are odd,}\\
		\frac{p^{m+d}+p^d}{2(p^d+1)} & \text{ if } p \text{ is even and } l \text{ is odd.}
	\end{cases}\]
\end{theorem}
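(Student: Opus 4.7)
Since Theorem~\ref{bluher} is a classical result of Bluher, I would essentially follow her strategy. Setting $\phi \colon \F_{p^m} \to \F_{p^m}$ by $\phi(x) = -(x^{q+1}+x)$, one has $N(p,m) = p^m - |\mathrm{Image}(\phi)|$, so the task reduces to determining the fiber structure of $\phi$.

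The first, and most delicate, step is to show that every fiber of $\phi$ has size in $\{0,1,2,q+1\}$, ruling out intermediate values $3,\ldots,q$. The starting point is a short algebraic manipulation: if $x_1 \neq x_2$ are both roots of $P_b(x) := x^{q+1}+x+b$, then subtracting the two equations and using the characteristic-$p$ identity $x_1^{q+1}-x_2^{q+1} = x_1^q(x_1-x_2) + x_2(x_1-x_2)^q$ yields the relation $x_1^q + x_2(x_1-x_2)^{q-1} + 1 = 0$. Applying this identity cyclically to a triple of roots forces strong constraints that, after careful analysis, rule out fibers of any size other than the stated four. In particular, fibers of size $q+1$ can occur only when $q+1 \mid p^m-1$, a divisibility that turns out to be equivalent to $l = m/d$ being even.

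With $A_i := |\{b \in \F_{p^m} : |\phi^{-1}(b)| = i\}|$, two immediate identities
\begin{align*}
A_0 + A_1 + A_2 + A_{q+1} &= p^m, \\
A_1 + 2A_2 + (q+1)A_{q+1} &= p^m
\end{align*}
(the second from summing the fiber sizes over all $b$, which equals $|\F_{p^m}|=p^m$) combine to give $A_0 = A_2 + qA_{q+1}$. To close the system, one counts $A_{q+1}$ directly in the $l$-even regime by parameterizing the completely-split $b$-values via orbits of the natural action of $\mu_{q+1}:=\{z\in\F_{p^m}^*:z^{q+1}=1\}$ on $\F_{p^m}^*$, and a parallel orbit argument determines $A_2$ in the remaining regimes.

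The main obstacle is that $A_{q+1}$ and $A_2$ depend non-trivially on the parity of $l$ and of $p$, essentially because the relevant quantity $\gcd(p^k+1,p^m-1)$ behaves differently in each regime: it equals $p^d+1$ when $l$ is even, equals $2$ when $l$ is odd and $p$ is odd, and equals $1$ when $l$ is odd and $p=2$. This trichotomy propagates directly into three distinct values of the pair $(A_2, A_{q+1})$, which after substitution into $A_0 = A_2 + qA_{q+1}$ and algebraic simplification using $p^m - 1 = (p^d-1)(p^{m-d}+p^{m-2d}+\cdots+1)$ produce the three closed-form expressions for $N(p,m)$ stated in the theorem.
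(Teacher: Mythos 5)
First, a point of reference: the paper does not prove this statement at all --- Theorem~\ref{bluher} is quoted verbatim from Bluher's paper and used as a black box --- so your sketch has to be measured against Bluher's own argument rather than anything in this manuscript. Your overall architecture (studying the fibers of $\phi(x)=-(x^{q+1}+x)$, writing down the two linear relations among the fiber statistics $A_i$, and closing the system by a separate count of the extreme fibers) is indeed the right skeleton and is essentially how the result is proved.

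There is, however, a genuine error at the heart of the sketch. When $k\nmid m$ the possible fiber sizes are $0,1,2$ and $p^{d}+1$ with $d=\gcd(k,m)$, \emph{not} $q+1=p^{k}+1$; this is exactly why $d$ appears in the final formulas. Moreover, fibers of maximal size $p^d+1$ occur in all three regimes (only their \emph{number} depends on the parity of $l$ and $p$), and your claimed equivalence between ``$q+1\mid p^m-1$'' and ``$l$ even'' is false: that divisibility is equivalent to $k\mid m$ with $m/k$ even (e.g.\ for $p=2$, $k=6$, $m=8$ one has $l=4$ even but $65\nmid 255$). A single example refutes both claims at once: take $p=2$, $k=2$, $m=3$, so $q=4$, $d=1$, $l=3$ odd. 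Then $x^{5}+x+1=(x^{2}+x+1)(x^{3}+x^{2}+1)$ has exactly $3=p^{d}+1$ roots in $\F_{8}$ --- neither $0$, $1$, $2$ nor $q+1=5$ --- even though $l$ is odd. With your assumption the system would degenerate to $A_0=A_2$ in the $l$-odd regimes and give the wrong answer (here $A_2=1$ while the theorem correctly gives $N(2,3)=3$; the true relation is $A_0=A_2+p^{d}A_{p^{d}+1}$ with $A_{p^d+1}=1$). Finally, the two genuinely hard steps --- ruling out intermediate fiber sizes and computing $A_{p^{d}+1}$ together with $A_1$ or $A_2$ in each regime --- are only asserted, so even after repairing the fiber-size claim the argument is not yet a proof.
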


\begin{lemma} \label{lem:gcd}
	Let $k,m \in \mathbb{N}$ and $p$ be a prime. Then
	\begin{itemize}
	\item $\gcd({p^k-1},{p^m-1}) = p^{\gcd({k},{m})}-1$.
	\item $\gcd({p^k+1},{p^m-1})=\begin{cases}
		1 & \text{if } m/\gcd({k},{m}) \text{ odd, and } p=2, \\
		2 & \text{if } m/\gcd({k},{m}) \text{ odd, and } p>2, \\
		p^{\gcd({k},{m})}+1 & \text{if } m/\gcd({k},{m}) \text{ even}.
	\end{cases}$
\end{itemize}
\end{lemma}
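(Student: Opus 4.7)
The statement is a standard number-theoretic lemma on gcds of expressions $p^k\pm 1$, so my plan is to give elementary proofs of both parts that stay within the Euclidean algorithm and order-of-element arguments modulo the gcd.

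For the first identity, I would run the Euclidean algorithm on $(k,m)$ and show that it is mirrored at the level of $(p^k-1,p^m-1)$. Writing $k=qm+r$ with $0\le r<m$, the identity $p^k-1=p^r(p^{qm}-1)+(p^r-1)$, combined with $(p^m-1)\mid(p^{qm}-1)$, yields $\gcd(p^k-1,p^m-1)=\gcd(p^r-1,p^m-1)$. Iterating tracks the Euclidean algorithm on the exponents and terminates at $p^{\gcd(k,m)}-1$.

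For the second identity, set $d=\gcd(k,m)$, $\ell=m/d$, $k'=k/d$ (so $\gcd(k',\ell)=1$), and $g=\gcd(p^k+1,p^m-1)$. The key step is to exploit the congruences $p^m\equiv 1$ and $p^k\equiv -1\pmod g$ simultaneously: the multiplicative order of $p$ modulo $g$ must divide both $m$ and $2k$, hence $\gcd(2k,m)$, which equals $d$ when $\ell$ is odd and $2d$ when $\ell$ is even. In the odd case, $g\mid p^d-1$, and since $p^k\equiv 1\pmod{p^d-1}$ we get $g\mid\gcd(2,p^d-1)$, giving $g=1$ when $p=2$ and $g=2$ when $p$ is odd (the lower bound $2\mid g$ being clear for odd $p$, as both $p^k+1$ and $p^m-1$ are then even). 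In the even case, writing $y=p^d$, part one applied to $(2k,m)$ gives $g\mid p^{2d}-1$, i.e.\ $y^2\equiv 1\pmod g$; combined with $y^{k'}\equiv -1\pmod g$ and $k'$ odd, this forces $y\equiv -1\pmod g$, so $g\mid p^d+1$. The reverse direction follows from the factorization $a^{k'}+1=(a+1)(a^{k'-1}-a^{k'-2}+\cdots+1)$ with $a=p^d$, together with $p^d+1\mid p^{2d}-1\mid p^m-1$ (using $2d\mid m$).

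I do not foresee any real obstacle. The only care needed is the parity case split and the computation of $\gcd(2k,m)$ via coprimality of $k'$ and $\ell$; once the order of $p$ modulo $g$ is identified as the key invariant, everything reduces to elementary manipulations.
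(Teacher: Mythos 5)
Your proof is correct and complete: the Euclidean-algorithm step for $\gcd(p^k-1,p^m-1)$, the reduction of the order of $p$ modulo $g$ to $\gcd(2k,m)\in\{d,2d\}$ via $\gcd(k',\ell)=1$, and the parity argument forcing $p^d\equiv -1\pmod g$ in the even case are all the standard ingredients, and the converse divisibilities are supplied where needed. The paper itself states this as a well-known basic lemma and gives no proof, so there is nothing to compare against; your write-up would serve as a valid proof of the statement as it stands.
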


\begin{theorem}\label{thm_count}
	Let $N_T(p,k,m,a)$ be the number of non-isotopic Taniguchi semifields $T(q,\alpha,a,b)$ on $\F_{p^m} \times \F_{p^m}$ with $k \neq m/2$. Set $d=\gcd(k,m)$ and $l=m/d$. Then
	\[ (p^d-2) \cdot N(p,m)/m \leq N_T(p,k,m,1) \leq  (p^d-2) \cdot N(p,m),\]
	where $N(p,m)$ is determined in Theorem~\ref{bluher}. Further,
	\[(p^{d}-2)\cdot p^d/m \leq N_T(p,k,m,0)\leq (p^d-2)\cdot p^d\]
	if $l$ is even,
	\[(p^{d}-2)/m \leq N_T(p,k,m,0) \leq p^{d}-2\]
	if $p,l$ are odd and $N_T(p,k,m,0)=0$ if $p$ is even and $l$ is odd.  The total number of non-isotopic Taniguchi semifields with  $k\neq m/2$ is 
	\[N_T(p,m)=\sum_{k=1}^{\lfloor \frac{m}{2} \rfloor}\left(N_T(p,k,m,0)+N_T(p,k,m,1)\right).\]
\end{theorem}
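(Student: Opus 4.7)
The plan is to translate Theorem~\ref{thm:tani_isotopy} into a cyclic-group orbit count on a product of coset spaces and then apply the crude bounds $1\leq|\text{orbit}|\leq m$. Theorem~\ref{thm:tani_isotopy} supplies three structural facts that will frame the counting: pre-semifields with $k_1\not\equiv\pm k_2\pmod{m}$ are non-isotopic; isotopy forces $a=a'$; and for fixed $k$ and $a$, the equivalence on pairs $(\alpha,b)$ is governed by an explicit parameter $t\in\{0,\ldots,m-1\}$. Together with Proposition~\ref{prop:minus}, restricting to $k\in\{1,\ldots,\lfloor m/2\rfloor\}\setminus\{m/2\}$ selects one representative from every $k$-equivalence class, which justifies the final summation formula, and the two cases $a=0,1$ can be treated separately and added.

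For the case $a=1$, I would set $H_1=\{x^{q-1}:x\in\F_{p^m}^*\}$; by Lemma~\ref{lem:gcd} the quotient $\F_{p^m}^*/H_1$ has order $p^d-1$. The validity condition $-\alpha\notin H_1$ excludes the single coset $-H_1$, leaving $|A|=p^d-2$ admissible cosets, while Bluher's Theorem~\ref{bluher} gives $|B|=N(p,m)$ admissible values of $b$. Since $H_1$ and both admissibility conditions are Frobenius-stable (a direct check for $A$, and the identity $(x^{q+1}+x+b)^p=y^{q+1}+y+b^p$ with $y=x^p$ for $B$), Theorem~\ref{thm:tani_isotopy} endows $A\times B$ with a well-defined $\mathbb{Z}/m\mathbb{Z}$-action whose orbits are precisely the isotopy classes. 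Every orbit has size between $1$ and $m$, which yields $(p^d-2)N(p,m)/m\leq N_T(p,k,m,1)\leq(p^d-2)N(p,m)$.

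For the case $a=0$, I would additionally set $H_2=\{x^{q+1}:x\in\F_{p^m}^*\}$ and define $B'$ to be the set of cosets $bH_2\in\F_{p^m}^*/H_2$ with $-b\notin H_2$. Lemma~\ref{lem:gcd} furnishes the index of $H_2$, and three subcases arise. If $l$ is even, the index equals $p^d+1$ and removing the invalid coset $-H_2$ yields $|B'|=p^d$. If $l$ is odd and $p$ is odd, the index is $2$, and a short check shows that exactly one of the two cosets is valid (namely $H_2$ when $-1\notin H_2$, and its non-trivial complement when $-1\in H_2$), so $|B'|=1$. If $l$ is odd and $p=2$, then $H_2=\F_{p^m}^*$, no valid $b$ exists, and $N_T(p,k,m,0)=0$. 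Theorem~\ref{thm:tani_isotopy} again gives a $\mathbb{Z}/m\mathbb{Z}$-action on $A\times B'$, and the same orbit-size argument produces the claimed inequalities with $|A\times B'|\in\{(p^d-2)p^d,\,p^d-2,\,0\}$ in the three subcases. Summing over $k\in\{1,\ldots,\lfloor m/2\rfloor\}$ and over $a\in\{0,1\}$ then produces the formula for $N_T(p,m)$.

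I do not expect a serious obstacle here: the deep inputs (the isotopy criterion, Bluher's count, and the $\gcd$ identities) are already available, so the remainder is essentially bookkeeping. The only delicate point is the $l$-odd, $p$-odd subcase of the $a=0$ count, where one must observe that exactly one coset of $H_2$ is admissible regardless of whether $-1\in H_2$; all other steps are immediate from the inputs cited above.
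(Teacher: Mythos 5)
Your proposal is correct and follows essentially the same route as the paper: reduce to counting admissible cosets of the $(q\mp1)$-st powers via Theorem~\ref{thm:tani_isotopy}, Lemma~\ref{lem:gcd} and Bluher's count, then absorb the Frobenius parameter $t$ into a factor between $1$ and $m$. Your packaging of this last step as a $\mathbb{Z}/m\mathbb{Z}$-orbit count on $A\times B$ (with the explicit check that exactly one coset survives in the $l,p$ odd subcase) is a slightly more formal rendering of the same argument the paper gives.
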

\begin{proof}
	By Theorem~\ref{thm:tani_isotopy}, $T(q,\alpha,1,b)$ and $T(q,\alpha',1,b')$ are isotopic if and only if there is a $t$ such that $\alpha^{p^t}/\alpha'$ is a $(q-1)$-st power and $b'^{p^t}=b$. $\alpha^{p^t}/\alpha'$ is a $(q-1)$-st power if and only if $\alpha^{p^t},\alpha'$ are in the same coset of the cyclic subgroup with $(p^m-1)/\gcd(q-1,p^m-1)=(p^m-1)/(p^d-1)$ elements of $\F_{p^m}^*$.  There are thus $p^d-1$ such cosets. But $-\alpha,-\alpha'$ must not be $(q-1)$-st powers themselves by the necessary conditions for Taniguchi pre-semifields, so there are between $(p^d-2)/m$ and $p^d-2$ possible choices for $\alpha$ that yield non-isotopic pre-semifields. The overall number of permissible $b$ is (by Theorem~\ref{bluher}) $N(p,m)$.  For a fixed $b,t$ there is exactly one choice of $b'$ that yields an isotopic semifields, so there are $N(p,m)$ many non-isotopic choices for $b$, yielding the desired bound.
	
	For $a=0$, we have again between $(p^d-2)/m$ and $p^d-2$ choices for $\alpha$ for ranging $t$. $b,b'$ yield isotopic pre-semifields if and only if $b'^{p^t}/b$ is a $(q+1)$-st power for some $t$. Here, similar to before, this means that $b'^{p^t}$ and $b$ are in the same coset of the cyclic subgroup with $(p^m-1)/\gcd(q+1,p^m-1)$ elements of $\F_{p^m}^*$. By Lemma~\ref{lem:gcd}, we have $\gcd(q+1,p^m-1)=p^d+1$ if $l$ is even and $\gcd(q+1,p^m-1)=2$ if $l,p$ are odd and $\gcd(q+1,p^m-1)=1$ if $p=2$ and $l$ is odd. So the number of cosets is $p^d+1$, $2$ or $1$ depending on $p,l$. Since $-b,-b'$ themselves must not be $(q+1)$-st powers (by the conditions on the Taniguchi pre-semifield $x^{q+1}+b$ has no roots) we thus have $p^d$, $1$ or $0$ valid cosets. 
	
	By Theorem~\ref{thm:tani_isotopy}, different choices for $1 \leq k <m/2$ yield non-isotopic semifields, proving our result.
	\qed
\end{proof}

\begin{remark}
The precise values for $N_T(p,k,m,a)$ in Theorem~\ref{thm_count} depend on the precise values of $m$ and $d$, and could be computed with additional effort, see~\cite[Section 5]{kasperszhou} where a similar computation is applied for the $p=2$, $d=1$ case. However, these calculations are quite involved and since the factor $1/m$ that lies between the bounds in Theorem~\ref{thm_count} does not change the asymptotics of the result, we choose to not go into any more details.
\end{remark}

\section{Comparison with other semifield families and conclusion}

Theorem~\ref{thm_count} (together with Theorem~\ref{bluher}) shows that the total amount of pairwise non-isotopic Taniguchi semifields of order $p^{2m}$ is approximately $p^{m+s}$ where $s$ is the largest divisor of $m$, excluding $m/2$. In particular if $3|m$, the number of semifields is around $p^{\frac{4}{3}m}$. 

On the other hand, 
\begin{itemize}
\item the best known lower bound on the number of pairwise non-isotopic odd-order semifields of order $p^{n}$ constructed using skew-polynomial 
rings (or, equivalently, cyclic semifields; see~\cite{lavrauw2013semifields} for details) is less than $p^{n/2}$~\cite[Theorem 10]{johnson2009generalization}. 

\item The number of pairwise non-isotopic generalized twisted fields of order $p^{n}$ is around $p^t$, where $t$ is the largest divisor of $n$ that is less than $n/2$~\cite[Theorem 27]{purpura2007counting}. 

\item The best known lower bound on the number of pairwise non-isotopic semifields of order $p^{4l}$ constructed with the HMO construction is less than $p^{2l}$~\cite{kantor2009hmo}. We are not aware of any other construction of odd-order semifields that yields better lower bounds. 
\end{itemize}

The count in Theorem~\ref{thm_count} thus shows that the family of Taniguchi semifields yields a better bound compared to the best currently known lower bounds. In particular, the results in this paper show that the family of Taniguchi semifields is the largest known family of odd-order semifields to date. Interestingly, the amount of non-isotopic Taniguchi semifields is even larger than the known upper bound on the number of non-isotopic odd-order semifields of order $p^{n}$ constructed using skew-polynomial 
rings,  which is $p^{n/2}\log_2(p^n)$~\cite{kantor_liebler_2008}. Note that the family of semifields constructed via skew-polynomial rings was 
recently extended~\cite{sheekey2019}, however it remains so far unclear 
how much this changes the bound mentioned above. 
The upper bound given by Kantor~\cite[Theorem 1.6.]{kantor2009hmo} on the number of non-isotopic semifields from the HMO construction is larger than the number of non-isotopic Taniguchi semifields. However, it is unclear how strict this bound is.

Note that the situation for non-isotopic semifields of even order is quite different, 
as the construction by Kantor and Williams~\cite{KW} yields bounds that are much 
higher. More precisely, the number of non-isotopic semifields of even order is
{not bounded from below by a polynomial in the order} of the semifield (see~\cite{Kantor03}). 
Whether the number of non-isotopic semifields of odd order is also 
{not bounded by a polynomial in the order} of the semifield is an open problem, 
widely known as Kantor's conjecture. In fact, even the number of \emph{commutative} semifields of 
odd order is asymptotically not much different from the current state-of-the-art bound for 
non-commutative semifields, with~\cite{comm_arxiv} giving a family of around $p^{n/4}$ 
non-isotopic, commutative semifields of order $p^n$. It is thus desirable to construct 
new, larger families of {(non-commutative) semifields especially of odd order.}

\subsubsection*{Acknowledgements}
{The athors would like to thank William Kantor and Yue Zhou for their comments,
and bringing some literature on non-commutative semifields referred to in Conclusion
to their attention.}
We further thank an anonymous reviewer for spotting a calculation error in 
Proposition~\ref{prop:2}.

The first author is supported by {\sf GA\v{C}R Grant 18-19087S - 301-13/201843}. The second author is supported by NSF grant 2127742.

\subsubsection*{Data Availability Statement} Data sharing not applicable to this article as no datasets were generated or analysed
during the current study.

\subsubsection*{Conflicts of Interest}The authors have no conflicts of interest to declare that are relevant to the content of this article.

\bibliographystyle{amsplain}
\bibliography{semifields_arxiv} 

\providecommand{\bysame}{\leavevmode\hbox to3em{\hrulefill}\thinspace}
\providecommand{\MR}{\relax\ifhmode\unskip\space\fi MR }
\providecommand{\MRhref}[2]{%
  \href{http://www.ams.org/mathscinet-getitem?mr=#1}{#2}
}
\providecommand{\href}[2]{#2}
\begin{thebibliography}{10}

\bibitem{Albert60}
A.~A. Albert, \emph{Finite division algebras and finite planes}, Proc.
  {S}ympos. {A}ppl. {M}ath., {V}ol. 10, American Mathematical Society,
  Providence, R.I., 1960, pp.~53--70. \MR{0116036}

\bibitem{bbeven}
Daniele Bartoli, J{\"u}rgen Bierbrauer, Gohar Kyureghyan, Massimo Giulietti,
  Stefano Marcugini, and Fernanda Pambianco, \emph{A family of semifields in
  characteristic 2}, Journal of Algebraic Combinatorics \textbf{45} (2017),
  no.~2, 455--473.

\bibitem{bbsf}
J\"{u}rgen Bierbrauer, \emph{Projective polynomials, a projection construction
  and a family of semifields}, Des. Codes Cryptogr. \textbf{79} (2016), no.~1,
  183--200. \MR{3470785}

\bibitem{Bluher}
Antonia~W. Bluher, \emph{On {$x^{q+1}+ax+b$}}, Finite Fields Appl. \textbf{10}
  (2004), no.~3, 285--305. \MR{2067599}

\bibitem{dickson1906commutative}
Leonard~Eugene Dickson, \emph{On commutative linear algebras in which division
  is always uniquely possible}, Transactions of the American Mathematical
  Society \textbf{7} (1906), no.~4, 514--522.

\bibitem{comm_arxiv}
Faruk Göloğlu and Lukas Kölsch, \emph{An exponential bound on the number of
  non-isotopic commutative semifields}, 2021, arXiv:2109.04923.

\bibitem{HuppertII}
Bertram Huppert and Norman Blackburn, \emph{Finite groups. {II}}, Grundlehren
  der Mathematischen Wissenschaften [Fundamental Principles of Mathematical
  Sciences], vol. 242, Springer-Verlag, Berlin-New York, 1982, AMD, 44.
  \MR{650245}

\bibitem{johnson2009generalization}
Norman~L Johnson, Giuseppe Marino, Olga Polverino, and Rocco Trombetti,
  \emph{On a generalization of cyclic semifields}, Journal of Algebraic
  Combinatorics \textbf{29} (2009), no.~1, 1--34.

\bibitem{Kantor03}
William~M. Kantor, \emph{Commutative semifields and symplectic spreads}, J.
  Algebra \textbf{270} (2003), no.~1, 96--114. \MR{2015931}

\bibitem{kantor2009hmo}
William~M Kantor, \emph{{HMO}-planes}, Adv. Geom \textbf{9} (2009), 31--43.

\bibitem{kantor_liebler_2008}
William~M. Kantor and Robert~A. Liebler, \emph{Semifields arising from
  irreducible semilinear transformations}, Journal of the Australian
  Mathematical Society \textbf{85} (2008), no.~3, 333–339.

\bibitem{KW}
William~M. Kantor and Michael~E. Williams, \emph{Symplectic semifield planes
  and {$\Bbb Z_4$}-linear codes}, Trans. Amer. Math. Soc. \textbf{356} (2004),
  no.~3, 895--938. \MR{1984461}

\bibitem{kaspers_phd}
Christian Kaspers, \emph{Equivalence problems of almost perfect nonlinear
  functions and disjoint difference families}, Ph.D. thesis,
  Otto-von-Guericke-Universität Magdeburg, Fakultät für Mathematik, 2021.

\bibitem{kasperszhou}
Christian Kaspers and Yue Zhou, \emph{The number of almost perfect nonlinear
  functions grows exponentially}, Journal of Cryptology \textbf{34} (2021),
  no.~1, 4.

\bibitem{Knuth65}
Donald~E. Knuth, \emph{Finite semifields and projective planes}, J. Algebra
  \textbf{2} (1965), 182--217. \MR{175942}

\bibitem{lavrauw2011finite}
Michel Lavrauw and Olga Polverino, \emph{Finite semifields}, Current research
  topics in Galois Geometry (2011), 131--160.

\bibitem{lavrauw2013semifields}
Michel Lavrauw and John Sheekey, \emph{Semifields from skew polynomial rings},
  Advances in Geometry \textbf{13} (2013), no.~4, 583--604.

\bibitem{pott2014semifields}
Alexander Pott, Kai-Uwe Schmidt, and Yue Zhou, \emph{Semifields, relative
  difference sets, and bent functions}, Algebraic curves and finite fields, De
  Gruyter, 2014, pp.~161--178.

\bibitem{purpura2007counting}
William Purpura, \emph{Counting the generalized twisted fields}, Note di
  Matematica \textbf{27} (2007), no.~1, 53--59.

\bibitem{sheekey}
John Sheekey, \emph{{MRD} codes: Constructions and connections}, Combinatorics
  and Finite Fields, de Gruyter, 2019, pp.~255--286.

\bibitem{sheekey2019}
\bysame, \emph{New semifields and new {MRD} codes from skew polynomial rings},
  Journal of the London Mathematical Society \textbf{101} (2020), no.~1,
  432--456.

\bibitem{tani_semi}
Hiroaki Taniguchi, \emph{On some quadratic {APN} functions}, Designs, Codes and
  Cryptography \textbf{87} (2019), no.~9, 1973--1983.

\end{thebibliography}
\end{document}